\def\eoe{\unskip\ \hglue0mm\hfill$\diamond$\smallskip\goodbreak}
\def\th@plain{%
  \thm@notefont{}
  \itshape 
}
\def\th@definition{%
  \thm@notefont{}
  \normalfont 
}
\newcommand{\calA}{\mathcal{A}}
\newcommand{\calE}{\mathcal{E}}
\newcommand{\calF}{\mathcal{F}}
\newcommand{\calG}{\mathcal{G}}
\newcommand{\calH}{\mathcal{H}}
\newcommand{\CC}{{\mathbb{C}}}  
\newcommand{\KK}{{\mathbb{K}}}
\newcommand{\RR}{{\mathbb{R}}}  
\newcommand{\ZZ}{{\mathbb{Z}}}  
\newcommand{\Ad}{{\operatorname{Ad}}}  
\newcommand{\id}{{\operatorname{id}}}  
\newcommand{\pr}{{\operatorname{pr}}} 
\newcommand{\G}{{\operatorname{G}}} 
\newcommand{\g}{{\mathfrak{g}}} 
\DeclareMathOperator{\PU}{PU}  
\newcommand{\U}{{\operatorname{U}}}  
\newcommand{\GG}{\mathbf{G}}  
\newcommand{\toto}{{~\rightrightarrows~}} 
\newtheorem*{rep@theorem}{\rep@title}
\newcommand{\newreptheorem}[2]{
\newenvironment{rep#1}[1]{
\def\rep@title{#2 ##1}
\begin{rep@theorem}}
{\end{rep@theorem}}}
\newcommand{\ifsection}[2]{\ifthenelse{\boolean{sections}}{#1}{#2}}
\theoremstyle{plain}
    \newtheorem{theorem}{Theorem}[section]
	\numberwithin{equation}{section}
	\numberwithin{figure}{section}
    \newtheorem{theorem}{Theorem}
\newtheorem{proposition}[theorem]{Proposition}
\newtheorem{corollary}[theorem]{Corollary}
\newtheorem{lemma}[theorem]{Lemma}
\theoremstyle{definition}
\newtheorem{definition}[theorem]{Definition}
\newtheorem{example}[theorem]{Example}
\newtheorem{remark}[theorem]{Remark}
\newcommand{\calBBS}{{\mathcal{B}^2\mathcal{S}^1}} 
\newcommand{\opDC}{{\operatorname{DC}}} 
\newcommand{\DC}{\mathcal{DC}} 
\newcommand{\dashto}{{\;\dashrightarrow\;}} 
\newcommand{\PPhi}{\mathbf{\Phi}}
\author{Derek Krepski}
\address{
Department of Mathematics, University of Manitoba, 
Winnipeg, MB, Canada 
}
\email{\href{mailto:Derek.Krepski@umanitoba.ca}{Derek.Krepski@umanitoba.ca}}
\urladdr{\href{http://server.math.umanitoba.ca/~dkrepski/}{http://server.math.umanitoba.ca/\textasciitilde dkrepski/}}
\title{Groupoid equivariant prequantization}
\date{\today}
\thanks{This work is partially supported by an NSERC Discovery Grant.}
\subjclass[2010]{Primary: 53D50; Secondary: 53C08, 53D17, 53D20, 58H05 }
\begin{document}

\begin{abstract}
In their 2005 paper, C.~Laurent-Gengoux and P.~Xu  define  prequantization for pre-Hamiltonian actions of quasi-presymplectic Lie groupoids in terms of $S^1$-central extensions of Lie groupoids.  The definition requires that the quasi-presymplectic structure be \emph{exact} (i.e.~the closed 3-form on the unit space of the Lie groupoid must be exact).  In the present paper, we define prequantization for pre-Hamiltonian actions of (not necessarily exact) quasi-presymplectic Lie groupoids in terms of Dixmier-Douady bundles.  The definition is a natural adaptation of E.~Meinrenken's treatment of prequantization for quasi-Hamiltonian Lie group actions with group-valued moment map. The definition given in this paper is shown to be compatible with the definition of Laurent-Gengoux and Xu when the underlying quasi-preysmplectic structure is exact.  Properties related to Morita invariance and symplectic reduction are established.
\end{abstract}

\maketitle

\section{Introduction}\label{s:intro}

Quasi-symplectic groupoids were introduced in \cite{xu2004momentum} to provide a unified framework to study several `momentum map theories,' 
 and in  \cite{bursztyn2004integration} (where they are called twisted presymplectic groupoids) as the global objects integrating twisted Dirac structures.  Recall that a quasi-\emph{pre}symplectic structure on a Lie groupoid $G_1\toto G_0$ consists of a pair of differential forms
$\eta \in \Omega^3(G_0)$ and $\omega\in \Omega^2(G_1)$ satisfying
\[
d\eta =0,\quad s^*\eta - t^*\eta = d\omega, \quad \text{and} \quad m^*\omega = \pr_1^*\omega+\pr_2^*\omega,
\]
where $s,t:G_1 \to G_0$ are the source and target maps, respectively, $m:G_1\times_{G_0} G_1 \to G_1$ is the groupoid multiplication, and $\pr_1, \pr_2$ are the natural projections.  Examples include (see Section \ref{ss:qsgpd}) the cotangent bundle of a Lie group, the AMM groupoid associated to the conjugation action on a compact Lie group (as well as its twisted conjugation counterpart), and more generally the dressing action groupoid associated to a group pair integrating a Manin pair.  

The resulting momentum map theories consist of Lie groupoid actions of the quasi-symplectic groupoids $G_1\toto G_0$ on manifolds $X$ equipped with a differential form $\omega_X \in \Omega^2(X)$ satisfying a compatibility condition with the quasi-symplectic structure  (see Definition \ref{d:Ham}).  These include ordinary Hamiltonian actions of Lie groups on symplectic manifolds, quasi-Hamiltonian actions with Lie group-valued moment map \cite{alekseev1998lie} and their twisted counterparts \cite{meinrenken2015convexity}, Hamiltonian loop group actions \cite{meinrenken1998hamiltonian}, Poisson-Lie group actions \cite{lu1990poisson,lu1991momentum}, Hamiltonian quasi-Poisson $G$-spaces \cite{alekseev2000manin}, and symmetric-space valued moment maps \cite{leingang2003symmetric}.

In their 2005 paper \cite{laurent2005quantization}, C.~Laurent-Gengoux and P.~Xu define prequantization for pre-Hamiltonian actions of \emph{exact} quasi-presymplectic groupoids---where the 3-form $\eta$ is exact---using central extensions of Lie groupoids (see also \cite{weinstein1991extensions,xu1994classical}).    Consequently, their definition is not immediately applicable in certain examples, notably in the case of quasi-Hamiltonian $G$-actions where $G$ is a compact semi-simple Lie group.  As noted by the authors, one must first pass to a Morita equivalent quasi-presymplectic groupoid (a \v{C}ech groupoid associated to a $G$-equivariant covering of $G$) in order to suitably interpret prequantization in that context. 

Following  E.~Meinrenken's approach in \cite{meinrenken2012twisted} for quasi-Hamiltonian group actions, this paper presents a definition of prequantization (see Definition \ref{d:preq}) for  Hamiltonian actions of quasi-presymplectic groupoids (with no exactness assumption), using the theory of Dixmier-Douady bundles (DD-bundles), which are bundles of $C^*$-algebras with typical fibre the compact operators on an infinite dimensional separable Hilbert space.  An important aspect of this definition relates to the notion of equivariance for DD-bundles used in the present context and investigated further in \cite{KrepskiWatts}, which makes use of the `higher structure' inherent in DD-bundles (that is also present for other models of $S^1$-gerbes, e.g.~see \cite{nikolaus2011equivariance}).

We summarize the main results of the paper.  In Theorem \ref{t:compare}(1), we show that the definition of prequantization presented here (Def.~\ref{d:preq}) is consistent with the definition of Laurent-Gengoux and Xu for pre-Hamiltonian actions of exact quasi-presymplectic Lie groupoids.  In this special case, one obtains a prequantum line bundle over the space $X$ acted upon, and Theorem \ref{t:compare}(2)  gives its (equivariant) real Chern class.  

Recall that Morita equivalent quasi-presymplectic groupoids give rise to a correspondence between their corresponding Hamiltonian spaces.  Theorem \ref{t:moritapreq} verifies a Morita invariance property of prequantization, showing that prequantization respects this correspondence.  As a corollary (Corollary \ref{c:ammloop}), we recover the equivalence of prequantizations for Hamiltonian loop group actions and quasi-Hamiltonian group actions, without the assumption of simple-connectivity on the underlying Lie group in \cite[Theorem A.7]{krepski2008pre}.

Finally, Theorem \ref{t:red}  shows that the prequantization of a pre-Hamiltonian space descends to a prequantization of its symplectic quotients (provided they are smooth---see  Remark \ref{r:quot}).


This paper is organized as follows. 

In Section \ref{s:prelim}, we recall elementary simplicial data related to Lie groupoids and establish the notation used throughout the paper.  We recall the definition of a quasi-presymplectic groupoid from \cite{xu2004momentum} together with some examples appearing in the literature and review the definition of a pre-Hamiltonian action in this setting.

As stated earlier, the definition of prequantization given in this paper uses the theory of DD bundles.  In Section \ref{s:gerbes}, we review some elements of Dixmier-Douady theory (following \cite{alekseev2012dirac}) and recall their groupoid-equivariant counterparts from \cite{KrepskiWatts}, which makes use of the fact that DD bundles from a bicategory.  We also recall the equivalent (strict) 2-category of differential characters of degree 3 (as well as its groupoid-equivariant counterpart), which we make use of in the proofs of several of our results.  We also review the relative versions (i.e.~relative equivariant DD bundles and relative equivariant differential characters) associated to a groupoid morphism.

In Section \ref{s:prequant}, we give the definition of prequantization for pre-Hamiltonian actions in terms of equivariant Dixmier-Douady theory.  After recalling the definition of prequantizion from \cite{laurent2005quantization} for pre-Hamiltonian actions of exact quasi-presymplectic groupoids---phrased in terms of $S^1$-central extensions of Lie groupoids---we state and prove the main result, Theorem \ref{t:compare}, which verifies that the definitions agree in this case.  

In Section \ref{s:morita}, we review the definition of  Morita equivalence for quasi-presymplectic groupoids and recall the correspondence between related Hamiltonian spaces for Morita equivalent quasi-presymplectic groupoids.  The Morita invariance property, stated in Theorem \ref{t:moritapreq} is established.

In the final section, we prove Theorem \ref{t:red} on the compatibility of prequantization with symplectic reduction.

\noindent \emph{Acknowledgements}. The author thanks H.~Bursztyn, E.~Lerman, E.~Meinrenken, and J.~Watts for helpful conversations and correspondence.  This work was partially supported by an NSERC Discovery Grant.

\section{Preliminaries and notation}\label{s:prelim}

In this section we record some preliminaries on Lie groupoids and recall some definitions surrounding quasi-presymplectic groupoids and their pre-Hamiltonian actions.

\subsection{Lie groupoids and double complexes} \label{ss:liegpd}
Mainly to establish notation, we recall aspects related to the simplicial manifold $G_\bullet$ associated to a Lie groupoid $\GG=(G_1 \rightrightarrows G_0)$, as well as the resulting double complex arising from a presheaf of chain complexes.  Denote the source and target maps by $s,t:G_1 \to G_0$, multiplication (composition) by $m:G_1 \times_{G_0} G_1 \to G_1$, inversion by $\iota:G_1 \to G_1$ and the unit by $\epsilon:G_0 \to G_1$.  To avoid the possibility of confusion, we may at times decorate the structure maps with a subscript to indicate the underlying groupoid (e.g. $\epsilon_{\GG}$ as the unit map of $\GG$).

For $k \geq 2$, write 
\[
G_k = \underbrace{G_1 \times_{G_0} G_1 \times_{G_0} \cdots \times_{G_0} G_1}_{k \text{ factors}}
\]
whose elements are $k$-tuples $(g_1, \ldots, g_k)$ of composable arrows (with $s(g_i)=t(g_{i+1})$).  For $0 \leq i \leq k$, let $\partial_i:G_k \to G_{k-1}$ be the \emph{face maps} given by
\[
\partial_i(g_1, \ldots, g_k) = \left\{ 
\begin{array}{ll}
(g_2, \ldots, g_k) & \text{if }i=0 \\
(g_1, \ldots, g_ig_{i+1},\ldots, g_k) & \text{if }0<i<k \\
(g_1, \ldots, g_{k-1}) & \text{if }i=k. \\
\end{array}
\right.
\]
For convenience, we set $\partial_0=s$ and $\partial_1=t$ on $G_1$.  It is easily verified that the face maps satisfy the simplicial identities $\partial_i\partial_j = \partial_{j-1} \partial_i$ for $i<j$.  (We will not require degeneracy maps in this paper.)

A morphism of Lie groupoids $\mathbf{F}: \GG \to \mathbf{H}$ yields a map of simplicial manifolds $F_\bullet:G_\bullet \to H_\bullet$, which for convenience may also be denoted $\mathbf{F}$.  If $X$ is a $\GG$-space with anchor map $\Phi:X\to G_0$, let $\PPhi:\GG\ltimes X \to \GG$  denote the resulting morphism of Lie groupoids.

Let $(C^*,d)$ denote a presheaf of cochain complexes, and consider the double complex $C^*(G_\bullet)$, depicted below.
\[
\xymatrix{
\vdots & \vdots & \vdots \\
C^2(G_0)	\ar[r]^\partial \ar[u]^{d} & C^2(G_1)\ar[r]^\partial \ar[u]^{-d} & C^2(G_2) \ar[r]^\partial \ar[u]^{d}	& \cdots	\\
C^1(G_0)	\ar[r]^\partial \ar[u]^{d} & C^1(G_1)\ar[r]^\partial \ar[u]^{-d} & C^1(G_2) \ar[r]^\partial \ar[u]^{d}	& \cdots	\\
C^0(G_0)	\ar[r]^\partial \ar[u]^{d} & C^0(G_1)\ar[r]^\partial \ar[u]^{-d} & C^0(G_2) \ar[r]^\partial \ar[u]^{d}	& \cdots	\\
}
\]
The horizontal differential is the alternating sum of pullbacks of face maps, $\partial = \sum (-1)^i \partial_i^*$.  Denote the total complex by 
\[
C(\GG):=\mathrm{Tot}(C^*(G_\bullet)), \quad \text{with } C^n(\GG) = \bigoplus_{p+q=n} C^p(G_q),
\]
with    total differential $\delta = (-1)^q d \oplus \partial$.
For example, when $C^*=\Omega^*$ is the de~Rham complex, we obtain the Bott-Shulman-Stasheff complex of $G_\bullet$.  In this paper, we will use the de~Rham complex $\Omega^*$, smooth singular cochains $C^*(-;\ZZ)$ and $C^*(-;\RR)$, and a cochain complex of Hopkins and Singer\cite{hopkins2005quadratic}, denoted $DC^*$ following the notation in \cite{lerman2008differential} (reviewed in Section \ref{ss:dc}).  Note that we will abuse notation and use integration of forms to view $\Omega^*(M)\subset C^*(M;\RR)$ and also view $C^*(M;\ZZ) \subset C^*(M;\RR)$.

\subsection{Quasi-symplectic groupoids and Hamiltonian actions} \label{ss:qsgpd}

We recall some definitions from \cite{xu2004momentum} and \cite{bursztyn2004integration}.

\begin{definition} \label{d:qss}
Let $\GG=(G_1\rightrightarrows G_0)$ be a Lie groupoid.  A \emph{quasi-presymplectic structure} on $\GG$ is a closed differential form $\omega\oplus \eta \in \Omega^2(G_1) \oplus \Omega^3(G_0)$ of degree 3 in the total complex $\Omega^*(\GG)$.
\end{definition}

In terms of the de Rham differential $d$ and simplicial differential $\partial$, the differential forms $\omega$ and $\eta$ must satisfy 
\[
d\eta =0, \quad d\omega=\partial \eta, \quad \text{and}\quad \partial \omega=0.
\]
\begin{remark} As noted in \cite[Remark 2.2]{xu2004momentum}, $\partial\omega=0$ is equivalent to the condition that $\omega$ be \emph{multiplicative}, i.e.~$m^*\omega=\pr_1^*\omega + \pr_2^*\omega$, where $m$ denotes the groupoid multiplication.  Equivalently,  the graph of   $m:G_2 \to G_1$ is an isotropic submanifold of $G_1 \times G_1 \times \bar{G}_1$  (with 2-form $\omega \oplus \omega \oplus (-\omega)$).
\end{remark}
\begin{remark}
Using the terminology in \cite{bursztyn2004integration}, the condition $d\omega=\partial \eta$ is the condition that \emph{$\omega$ is relatively $\eta$-closed}.
\end{remark}

A quasi-presymplectic Lie groupoid $(\GG,\omega\oplus \eta)$ is \emph{quasi-symplectic} if the following  condition controlling the degeneracy of $\omega$ is satisfied \cite{bursztyn2004integration,xu2004momentum}:
\[
\ker(\omega_x) \cap \ker (ds)_x \cap \ker (dt)_x = \{0\} \quad \text{for all } x\in G_0.
\]
In this work, we will not make use of the above condition.

\begin{remark}
Quasi-symplectic groupoids $(\GG,\omega \oplus \eta)$ were also introduced in \cite{bursztyn2004integration}, where they are called called \emph{$\eta$-twisted presymplectic groupoids}.  (In that setting, the prefix \emph{pre} alludes to the fact that $\omega$ is possibly degenerate, and the prefix is removed when $\omega$ is non-degenerate.)
As noted in \cite{xu2004momentum}, symplectic groupoids \cite{weinstein1987symplectic} and twisted symplectic groupoids \cite{cattaneo2004integration} are important special cases of quasi-symplectic groupoids.  
\end{remark}

We recall some familiar examples below.

\begin{example}[Cotangent bundle of a Lie group] \label{eg:TG} Let $G$ be a Lie group with Lie algebra $\g$.
The symplectic groupoid  $T^*G \rightrightarrows \mathfrak{g}^*$ (with canonical symplectic form on $T^*G$ and zero form on $\g^*$) is a quasi-symplectic groupoid. 
\eoe  
\end{example}

\begin{example}[AMM groupoid] \label{eg:amm} \cite{alekseev1998lie,bursztyn2004integration,xu2004momentum}
Let $G$ be a Lie group equipped with a bi-invariant inner product $\langle\cdot, \cdot \rangle$ on $\g$.  Let $G\times G \rightrightarrows G$ be the action groupoid for the conjugation action of $G$ on itself.  Let $\theta^L$ and $\theta^R$ denote, respectively, the left and right invariant Maurer-Cartan forms on $G$. Let $\eta \in \Omega^3(G)$ be the Cartan form,
\[
\eta =\frac{1}{12} \langle \theta^L,[\theta^L,\theta^L] \rangle,
\]
and $\omega \in \Omega^2(G\times G)$ be given by
\[
\omega_{(g,x)} = -\frac{1}{2} \left( \langle \Ad_x \pr_1^*\theta^L,\pr_1^*\theta^L \rangle + \langle \pr_1^*\theta^L,\pr_2^*(\theta^L+\theta^R) \rangle
 \right)
\]
Then $(G\ltimes G \rightrightarrows G,\omega \oplus \eta)$ is a quasi-symplectic groupoid.
\eoe
\end{example}

\begin{example}[Twisted AMM groupoid] \label{eg:ammtwist}
The AMM groupoid above may be twisted by an automorphism of the Lie group $G$, as in \cite{meinrenken2015convexity}.  Let $\kappa:G\to G$ be an automorphism, and consider the \emph{$\kappa$-twisted conjugation action} of $G$ on itself
\[
\Ad_g^{(\kappa)} (x) = gx\kappa(g^{-1}).
\]
Let $G\kappa$ denote the manifold $G$ equipped with the above action.
Let $\eta$ be the Cartan form as above, and let $\omega^{(\kappa)} \in \Omega^2(G\times G\kappa)$ be given by
\[
\omega^{(\kappa)}_{(g,x)}= -\frac{1}{2} \left( \langle \Ad_x \pr_1^* \kappa^*\theta^L, \pr_1^*\theta^L \rangle  +\langle \pr_1^*\kappa^*\theta^L,\pr_2^*\theta^L\rangle + \langle \pr_1^*\theta^L,  \pr_2^*\theta^R\rangle \right).
\]
As in the untwisted case, it is straightforward to verify that $(G\ltimes G\kappa \rightrightarrows G\kappa,\omega^{(\kappa)}\oplus \eta)$ is a quasi-symplectic groupoid, since  the inner product is invariant under automorphisms $\kappa$.
\eoe
\end{example}

\begin{example}[Manin pairs]
Let $\mathfrak{d}$ be a Lie algebra equipped with an invariant, nondegenerate, symmetric bilinear form of split signature and $\g\subset \mathfrak{d}$ a Lie subalgebra that is also a maximal isotropic subspace.   Suppose that the pair $(\mathfrak{d},\mathfrak{g})$ (the \emph{Manin pair} \cite{alekseev2000manin}) integrates to a \emph{group pair} $(D,G)$ consisting of a connected Lie group $D$ with Lie algebra $\mathfrak{d}$, and $G\subset D$ a connected closed Lie subgroup with Lie algebra $\g$. Let $D/G$ denote the space of right cosets, and consider the \emph{dressing} action of $G\subset D$ on $D/G$ induced by left translation.
As shown in \cite{bursztyn2009dirac}, a choice of \emph{isotropic} connection on the principal $G$-bundle $D\to D/G$ determines a quasi-presymplectic structure on $G\ltimes D/G \toto D/G$.    

Examples \ref{eg:TG}, \ref{eg:amm}, and \ref{eg:ammtwist} may all be obtained via Manin pairs.  Another important example of this kind is the action groupoid $G\ltimes G^* \toto G^*$ of a complete Poisson-Lie group $G$  acting on its Poisson-Lie dual group $G^*$ by the left dressing action \cite{lu1990poisson,lu1991momentum}.
\eoe
\end{example}

\begin{example}[Coadjoint action groupoid of loop groups] \label{eg:loopgroup} \cite{presley1986loop,behrend2003equivariant}
Let $G$ be a compact Lie group with bi-invariant inner product on its Lie algebra $\g$.  
Fix a real number $s>1$, and consider the loop group $LG=\operatorname{Map}(S^1,G)$ consisting of maps of Sobolev class $s+1/2$ .
The inner product on $\g$ induces a 2-cocyle on the loop Lie algebra $L\g$ and hence corresponds to a central extension of Lie algebras $\widehat{L\g}=L\g \oplus \RR$.  Suppose the central extension integrates to a central extension of Lie groups
\[
1\to S^1 \to \widehat{LG} \to LG \to 1.
\]
Let $L\g^*=\Omega^1(S^1;\g)$ be the space of $\g$-valued 1-forms of Sobolev class $s-1/2$, with pairing $L\g\times L\g^* \to \RR$ given by $(\xi,A) \mapsto \int_{S^1} \langle \xi,A \rangle$.  Let $\widehat{L\g^*}=L\g^*\oplus \RR$, and observe that the coadjoint action of $\widehat{LG}$ on $\widehat{L\g^*}$ factors through $LG$.  Identifying $L\g^* \cong L\g^*\oplus \{1\}\subset \widehat{L\g^*}$  recovers the standard $LG$-action on $L\g$ by gauge transformations.  Let $\omega_{\mathsf{can}}$ be the 2-form on $\widehat{LG}\times \widehat{L\g^*}$, defined by the same formula as the canonical symplectic form on the trivialized cotangent bundle $T^*\widehat{LG}$.  Then the restriction of $\omega_{\mathsf{can}}$ to $\widehat{LG} \times L\g^*$ descends to a 2-form $\nu$ on $LG \times L\g^*$, and $(LG\ltimes L\g^* \toto L\g^*,\nu)$ is a quasi-symplectic groupoid. 
\eoe
\end{example}

\begin{definition} \label{d:Ham}
Let $\GG = (G_1 \rightrightarrows G_0, \omega \oplus \eta)$ be a quasi-presymplectic groupoid.  
A \emph{pre-Hamiltonian $\GG$-space} is a triple $(X,\omega_X,\Phi)$ consisting of  a (left) $\GG$-action on manifold  $X$ with anchor $\Phi:X \to G_0$ together with a  2-form $\omega_X$ on $X$ satisfying
\[
\PPhi^*(\omega\oplus\eta) = -\delta \omega_X \in \Omega^*(\GG \ltimes X)
\]
where $\GG \ltimes X$ denotes the action groupoid for the $\GG$-action on $X$.
\end{definition}
\begin{remark}
Note that Definition \ref{d:Ham} employs the opposite sign convention for pre-Hamiltonian spaces appearing in \cite{xu2004momentum}.
\end{remark}

Hamiltonian actions by quasi-symplectic groupoids were introduced in \cite{xu2004momentum} as pre-Hamiltonian actions satisfying a \emph{minimal degeneracy condition}, controlling the degeneracy of the 2-form $\omega_X$.  This paper does not make use of the minimal degeneracy condition.

Applying Definition \ref{d:Ham} to the examples of quasi-presymplectic groupoids listed above provides several important examples of (pre)-Hamiltonian actions, such as:
ordinary Hamiltonian $G$-actions on symplectic manifolds, quasi-Hamiltonian $G$-actions with group-valued moment map \cite{alekseev1998lie} and their twisted counterparts \cite{meinrenken2015convexity}, Hamiltonian loop group actions \cite{meinrenken1998hamiltonian}, Poisson-Lie group actions \cite{lu1990poisson,lu1991momentum}, Hamiltonian quasi-Poisson $G$-spaces \cite{alekseev2000manin}, and symmetric-space valued moment maps \cite{leingang2003symmetric}.

\section{Equivariant  Dixmier-Douady bundles and differential characters} \label{s:gerbes}

In this section, we recall some perspectives on  Dixmier-Douady bundles and differential characters.  
We also briefly review their groupoid equivariant counterparts as in \cite{KrepskiWatts}.  
Dixmier-Douady bundles are geometric models for \emph{$S^1$-gerbes}.  Other such models appearing in the literature are $S^1$-central extensions (see Definition \ref{d:s1centext}), bundle gerbes \cite{murray1996bundle}, and principal Lie 2-group bundles \cite{baez2007higher}. 

\subsection{Dixmier-Douady bundles} \label{ss:dd}
We provide a brief review of Dixmier-Douady bundles, following \cite{alekseev2012dirac}.  For further background, see also \cite{raeburn1998morita}.

Let $\calH$ denote an infinite dimensional separable Hilbert space, and $\KK(\calH)$ the $C^*$-algebra of compact operators on $\calH$.  
Recall that the automorphism group $\operatorname{Aut}(\KK(\calH))\cong \PU(\calH)=\U(\calH)/S^1$, where $\U(\calH)$ (with strong operator topology) acts on $\KK(\calH)$ by conjugation. 

Recall that a \emph{Dixmier-Douady bundle} (\emph{DD-bundle}) $\calA\to M$ is a locally trivial bundle of $C^*$-algebras with typical fibre $\KK(\calH)$ and structure group $\PU(\calH)$.  

A \emph{Morita isomorphism} of DD-bundles $\calE\colon \calA_1\dashto \calA_2$ is a locally trivial Banach space bundle $\calE\to M$ of $\calA_2-\calA_1$ bimodules with typical fibre  $\KK(\calH_1,\calH_2)$, the compact operators from $\calH_1$ to $\calH_2$.  Locally, the bimodule action is given fibrewise by the natural $\KK(\calH_2)-\KK(\calH_1)$ bimodule action given by post- and pre-composition of operators, respectively.
The composition of two Morita isomorphisms $\calE_1\colon\calA_1\dashto\calA_2$ and $\calE_2\colon\calA_2\dashto\calA_3$ is given by $\calE_2\circ\calE_1=\calE_2\otimes_{\calA_2}\calE_1$,  the fibrewise completion of the (algebraic) tensor product over $\calA_2$.

Given two Morita isomorphisms $\calE_1,\calE_2\colon\calA_1\dashto\calA_2$, a \emph{$2$-isomorphism} $\tau\colon\calE_1\Rightarrow\calE_2$ is a continuous bundle isomorphism $\tau:\calE_1\to\calE_2$ that intertwines the norms and the ($\calA_2-\calA_1$)-bimodule structures.

Recall that given a Morita isomorphism $\calE\colon \calA_1 \dashto \calA_2$, the \emph{opposite} Morita isomorphism $\calE^*\colon \calA_2 \dashto \calA_1$ is given by $\calE^*=\calE$ as real vector bundles, with opposite (conjugate) scalar multiplication.  There are natural 2-isomorphisms $\calE^* \otimes_{\calA_2} \calE \cong \calA_1$ and $\calE \otimes_{\calA_1} \calE^* \cong \calA_2$.  

\begin{remark} \label{r:reorder}
Suppose given Morita isomorphisms $\calE_1$, $\calE_2$ and $\calF$ as well as a 2-isomorphism $\tau\colon \calE_2 \otimes_{\calA_2} \calE_1 \to \calF$ as in the diagram below.
\[
\xymatrix{
\rrtwocell<\omit>{<5>\tau} & \calA_2 \ar@{-->}[dr]^{\calE_2} & \\
 \calA_1 \ar@{-->}[ur]^{\calE_1} \ar@{-->}[rr]_{\calF} & & \calA_3
}
\]
At times, we will abuse notation and view $\tau$ instead as the composition 
\[
\xymatrix{ 
\calE_1 \ar[r]^-\cong & {\calA_3 \otimes_{\calA_3} \calE_1} \ar[r]^-\cong & {\calE_2^* \otimes_{\calA_3} \calE_2 \otimes_{\calA_2} \calE_1} \ar[r]^-{\id\otimes \tau} & {\calE_2^* \otimes_{\calA_3} \calF}
}.
\]
\end{remark}

The above definitions allow us to view DD-bundles over a fixed manifold $M$ as a bigroupoid (i.e.~weak 2-category with coherently invertible 1-arrows and invertible 2-arrows).  Hence, for example, we may speak of `horizontal' composition  of 2-isomorphisms $\tau \otimes \sigma:\calF_1 \otimes_{\calA_2} \calE_1 \Rightarrow \calF_2\otimes_{\calA_2} \calE_2$:
\[
\xymatrix{
\calA_1 \ar@{-->}@/^1pc/[r]^{\calE_1} \ar@{-->}@/_1pc/[r]_{\calE_2}\xtwocell[0,1]{}\omit{\sigma}  & \calA_2 \ar@{-->}@/^1pc/[r]^{\calF_1} \ar@{-->}@/_1pc/[r]_{\calF_2}\xtwocell[0,1]{}\omit{\tau}  & \calA_3
}
\]
as well as `vertical' composition $\tau\circ \sigma$ of 2-isomorphisms, $\calE \stackrel{\sigma}{\Longrightarrow} \calF  \stackrel{\tau}{\Longrightarrow} \calG$,
\[
\xymatrix{
\calA_1 \ar@{-->}@/^3pc/[r]^{\calE} \ar@{-->}@/_3pc/[r]_{\calG} \xtwocell[0,1]{}\omit{<-4>\sigma} \xtwocell[0,1]{}\omit{<4>\tau} \ar@{-->}[r]^{\calF}& \calA_2
}
\]
which is the usual composition of the underlying bundle maps.



\subsubsection{Equivariant Dixmier-Douady bundles} \label{ss:eqdd}

 We recall the definition of a $\GG$-equivariant DD-bundle below\cite{KrepskiWatts}.

\begin{definition} \label{d:eqDD} Let $\GG=(G_1 \rightrightarrows G_0)$ be a Lie groupoid.  
\begin{enumerate}
\item A \emph{$\GG$-equivariant DD-bundle} is a triple $(\calA, \calE, \tau)$ consisting of a DD-bundle $\calA\to G_0$, a Morita isomorphism $\calE:\partial_0^*\calA \dashto \partial_1^*\calA$ and a 2-isomorphism 
$$
\tau: \partial_2^*\calE \otimes_{\partial_0^*\partial_1^*\calA} \partial_0^*\calE \to \partial_1^* \calE
$$
 satisfying the coherence condition $\partial_2^*\tau \circ (\id\otimes \partial_0^*\tau) =\partial_1^*\tau \circ (\partial_3^*\tau \otimes \id)$. 
\item A \emph{$\GG$-equivariant Morita isomorphism} $(\calA,\calE,\tau) \dashto (\calA',\calE',\tau')$ is a pair $(\calG,\phi)$ consisting of a Morita isomorphism $\calG:\calA \dashto \calA'$ and a 2-isomorphism 
$$
\phi: \calE' \otimes_{\partial_0^* \calA'} \partial_0^*\calG \to \partial_1^*\calG \otimes_{\partial_1^*\calA} \calE
$$
 satisfying the coherence condition $(\id \otimes \tau ) \circ (\partial_2^* \phi \otimes \id) \circ (\id \otimes \partial_0^*\phi) = \partial_1^*\phi \circ (\tau' \otimes \id)$.
 \item A \emph{$\GG$-equivariant 2-isomorphism} $(\calF,\alpha) \Rightarrow (\calG,\beta)$ is a 2-isomorphism $\sigma:\calF \to \calG$ satisfying the coherence condition
 $
 \beta \circ \partial_0^*\sigma = \partial_1^*\sigma \circ \alpha.
 $
\end{enumerate}
\end{definition}

\begin{remark} \label{remark:simp}
In Definition \ref{d:eqDD} we implicitly use the simplicial identities---for example, by viewing $\partial_1^*\calE$ as a $\partial_1^*\partial_1^*\calA$-$\partial_0^*\partial_0^*\calA$ bimodule and $\partial_2^*\calE$ as a $\partial_1^*\partial_1^*\calA$-$\partial_0^*\partial_1^*\calA$ bimodule.  We continue to abuse notation in this way throughout the paper.
\end{remark}

Given a morphism $\PPhi:\mathbf{H} \to \GG$ of Lie groupoids and an $\GG$-equivariant DD-bundle $(\calA, \calE, \tau)$ we can pullback along $\PPhi$ in the obvious way to get a $\mathbf{H}$-equivariant DD-bundle $\PPhi^*(\calA,\calE,\tau)$, 
\[
\PPhi^*(\calA,\calE,\tau) = (\Phi_0^*\calA, \Phi_1^* \calE, \Phi_2^*\tau).
\]

\begin{remark}
Similar to Remark \ref{remark:simp}, to make sense of the above definition of pullback, we implicitly use (canonical) bundle isomorphisms so that, for instance, $\Phi_1^*\calE$ (a $\Phi_1^*\partial_1^*\calA-\Phi_1^*\partial_2^*\calA$ bimodule) can be viewed as a $\partial_1^*\Phi_0^* \calA-\partial_0^*\Phi_0^* \calA$ bimodule.
\end{remark}

\subsubsection{Relative Dixmier-Douady bundles} \label{ss:reldd}

Recall that given a map $f:M_1 \to M_2$, a \emph{relative DD-bundle for $f$} is a pair $(\calA,\calE)$ consisting of a DD-bundle $\calA\to M_2$ together with a Morita isomorphism $\calE: M_1 \times \CC \dashto f^*\calA$ (i.e.~a Morita trivialization of the pullback of $\calA$ along $f$).  The equivariant counterpart is defined similarly.

\begin{definition} \label{d:relDD}
Let $\PPhi:\mathbf{H} \to \GG$ be a Lie groupoid morphism.  
\begin{enumerate}
\item A \emph{relative DD-bundle for $\PPhi$} is a pair $(\calF,\alpha;\calA,\calE,\tau)$ consisting of a $\mathbf{G}$-equivariant DD-bundle $(\calA,\calE,\tau)$ together with a Morita isomorphism 
$$(\calF,\alpha):  (\CC, \CC, \id) \dashto \PPhi^*(\calA,\calE,\tau).$$  
\item A \emph{relative Morita isomorphism} $(\calF,\alpha;\calA,\calE,\tau) \dashto (\calF',\alpha';\calA',\calE',\tau')$ is a pair $(\calG,\beta;\rho)$ consisting of a $\GG$-equivariant Morita isomorphism 
$$(\calG,\beta):(\calA,\calE,\tau) \dashto (\calA',\calE',\tau')$$
 and an $\mathbf{H}$-equivariant 2-isomorphism $\rho:  \PPhi^*(\calG,\beta) \circ (\calF',\alpha')   \Rightarrow (\calF,\alpha)$.
\end{enumerate}
\end{definition}

\subsection{Differential characters} \label{ss:dc}

In \cite{KrepskiWatts}, it was shown that the bicategory of DD-bundles over a manifold is equivalent to a (strict) 2-category $\DC_1^3(M)$, the 2-category of differential characters (of degree 3) on $M$, which we briefly recall below.  (See also \cite{lerman2008differential} and \cite{hopkins2005quadratic}  for further details on the construction.)

To begin, let $\opDC_1^*$ denote the chain complex,
$$
\opDC^n_1(M) = \{(c,h,\omega) \in C^n(M;\ZZ) \times C^{n-1}(M;\ZZ) \times \Omega^n(M) \, \vert\,  \omega=0 \text{ if } n=0 \}
$$
with differential given by $d(c,h,\omega)=(dc,\omega-c-dh,d\omega)$.

An object of the 2-category $\DC_1^3(M)$, called a \emph{differential character of (degree 3) on $M$}, is a cocycle $z=(c,h,\omega)$ in $\opDC_1^3(M)$.  Given differential characters $z_1, z_2$ on $M$, a 1-arrow $z_1 \to z_2$ is a primitive $y\in \opDC_1^2(M)$ of their difference, $z_1-z_2=dy$.  Given 1-arrows, $y_1, y_2:z_1 \to z_2$, a 2-arrow $y_1 \Rightarrow y_2$ is an equivalence class $[x]\in \opDC^1_1(M)$, where $x$ is a primitive of their difference, $y_2-y_1 = dx$---see Remark \ref{r:signs} explaining the chosen sign convention.  Here we identify $x\sim x+dv$ for all $v \in \opDC^0_1(M)$. Composition is given by addition, and hence all 1- and 2- arrows are isomorphisms.

\begin{remark}
The subscript in the notation for the above chain complex and the resulting 2-category of differential characters is included for completeness---the above is the $s=1$ instance of family of chain complexes and 2-categories $\opDC_s^*$  and $\DC_s^*$, respectively.  The interested reader may consult \cite{lerman2008differential} and \cite{hopkins2005quadratic} for the significance of this parameter.
\end{remark}

\subsubsection{Equivariant differential characters} \label{ss:eqdc}

Analogous to Definition \ref{d:eqDD}, we define the 2-category $\DC_1^3(\GG)$ of $\GG$-equivariant differential characters (of degree 3) as follows.

\begin{definition} \label{d:eqdc}
Let $\GG=(G_1 \rightrightarrows G_0)$ be a Lie groupoid.
\begin{enumerate}
\item A \emph{$\GG$-equivariant differential character (of degree 3)} is a triple $(z,y,[x])$ consisting of a differential character $z$ in $\opDC_1^3(G_0)$, a 1-isomorphism $y:\partial_0^*z \to \partial_1^*z$ in $\opDC_1^2(G_1)$ and a 2-isomorphism $[x]: \partial_2^* y + \partial_0^*y \Rightarrow \partial_1^* y$, $x$ in $\opDC_1^1(G_2)$, satisfying the coherence condition that $\partial x$ is exact in $\opDC_1^1(G_3)$.
\item A \emph{1-isomorphism} $(z_1,y_1,[x_1]) \to (z_2,y_2,[x_2])$  \emph{of $\GG$-equivariant differential characters}  is a pair $(u,[v])$ where $u:z_1 \to z_2$ is in $\opDC_1^2(G_0)$ and $[v]:y_2 + \partial_0^*u \Rightarrow \partial_1^*u + y_1$ is a 2-isomorphism, with $v$ in $\opDC_1^1(G_1)$ satisfying the coherence condition that $x_1-x_2-\partial v$ is exact in $\opDC^1_1(G_2)$.
\item A \emph{$\GG$-equivariant 2-isomorphism} $(u,[v]) \Rightarrow (s,[t])$ is a 2-isomorphism $[w]:u \Rightarrow s $ satisfying the coherence condition that
 $t-v-\partial w$ is exact in $\opDC_1^1(G_1)$.
\end{enumerate}
\end{definition}

\begin{remark} \label{r:signs}
The sign conventions used here for defining 1- and 2-arrows is chosen mainly to be able to view  an object $(z,y,[x])$ in $\DC_1^3(\GG)$ as a cocycle 
\[
z\oplus y \oplus x \oplus w \in \bigoplus_{p+q=3} \opDC_1^p(G_q)
\]
(where $w$ is a primitive of $\partial x$) in the total complex of the double complex $\opDC_1(\GG)$.  
\end{remark}

\begin{lemma} \label{l:dccocycle}
$\GG$-equivariant differential characters are in one-to-one correspondence with $Z^3(\opDC_1(\GG))/\delta \opDC^0_1(G_2)$.
\end{lemma}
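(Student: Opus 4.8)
The plan is to unwind Definition \ref{d:eqdc}(1) and match it term-by-term with the cocycle condition in the total complex, exactly as anticipated by the sign conventions fixed in Remark \ref{r:signs}. A degree-$3$ element of $\opDC_1(\GG) = \mathrm{Tot}(\opDC_1^*(G_\bullet))$ decomposes according to bidegree as a quadruple
\[
z \oplus y \oplus x \oplus w \in \opDC_1^3(G_0) \oplus \opDC_1^2(G_1) \oplus \opDC_1^1(G_2) \oplus \opDC_1^0(G_3),
\]
and I would first write out $\delta(z \oplus y \oplus x \oplus w) = 0$ component-by-component using $\delta = (-1)^q d \oplus \partial$. In bidegrees $(4,0)$ through $(0,4)$ this yields the five equations $dz = 0$, $\partial z = dy$, $\partial y + dx = 0$, $\partial x = dw$, and $\partial w = 0$.

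Comparing with Definition \ref{d:eqdc}(1), the first four equations are precisely the conditions that $z$ be a differential character, that $y$ be a $1$-isomorphism $\partial_0^*z \to \partial_1^*z$, that $[x]$ be a $2$-isomorphism $\partial_2^*y + \partial_0^*y \Rightarrow \partial_1^*y$ (here the alternating signs in $\partial$ reproduce the ``source minus target'' convention), and that $\partial x$ be exact with primitive $w$. The key observation is that $d$ is injective on $\opDC_1^0$: since $C^{-1}(-;\ZZ) = 0$, an element of $\opDC_1^0(M)$ is a triple $(c,0,0)$ with $d(c,0,0) = (dc,-c,0)$, which vanishes only when $c = 0$. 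Consequently the coherence condition that $\partial x$ be exact determines its primitive $w$ uniquely, and the fifth equation $\partial w = 0$ is automatic: applying $\partial$ to $\partial x = dw$ gives $d(\partial w) = \partial^2 x = 0$, whence $\partial w = 0$ by injectivity.

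With this in hand I would define the correspondence. To a $\GG$-equivariant differential character $(z, y, [x])$ I assign the total cocycle $z \oplus y \oplus x \oplus w$, where $w$ is the unique primitive of $\partial x$; by the above this is a well-defined element of $Z^3(\opDC_1(\GG))$. Conversely, a cocycle $z \oplus y \oplus x \oplus w$ yields the triple $(z, y, [x])$. To see these pass to and from the quotient by $\delta \opDC_1^0(G_2)$, note that for $v \in \opDC_1^0(G_2)$ one has $\delta v = 0 \oplus 0 \oplus dv \oplus \partial v$, so quotienting replaces $x$ by $x + dv$ and correspondingly $w$ by $w + \partial v$, leaving $z$ and $y$ untouched. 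This is exactly the equivalence $x \sim x + dv$ built into the $2$-arrow class $[x]$, so the two assignments descend to mutually inverse maps.

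The computations are routine, so I do not expect a genuine obstacle; the one point requiring care is the bookkeeping of signs so that the cocycle equations reproduce the arrow directions of Definition \ref{d:eqdc}, together with the handling of the auxiliary datum $w$, which is invisible in the triple $(z,y,[x])$ but necessary to form an honest total cocycle. Verifying the uniqueness of $w$ and the automatic vanishing of $\partial w$ via injectivity of $d$ on $\opDC_1^0$ is the linchpin that upgrades the correspondence from a surjection to a clean bijection.
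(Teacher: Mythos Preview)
Your proposal is correct and follows essentially the same approach as the paper: both identify a $\GG$-equivariant differential character $(z,y,[x])$ with a total cocycle $z\oplus y\oplus x\oplus w$ (the paper writes $a$ for your $w$) and observe that the ambiguity in choosing $x$ and $w$ is precisely $\delta\opDC_1^0(G_2)$. Your version is in fact more complete than the paper's sketch, since you explicitly verify the injectivity of $d$ on $\opDC_1^0$ to pin down $w$ uniquely and to show that the fifth cocycle equation $\partial w=0$ is automatic---points the paper leaves implicit.
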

\begin{proof}
Note that a $\GG$-equivariant differential character amounts to a degree 3 cocycle $\xi$ in the total complex $\opDC_1^*(\GG)$,
$$
\xi=a \oplus x \oplus y \oplus z \in \opDC_1^0(G_3)\oplus\opDC_1^1(G_2)\oplus\opDC_1^2(G_1)\oplus\opDC_1^3(G_0),
$$  
where $da=\partial x$.  Changing $x$ to $x+df$ for some $f\in \opDC^0_1(G_2)$ and $a$ to $a+\partial b$ results in the \emph{same} equivariant differential character.  Therefore, $\GG$-equivariant differential characters are in one-to-one correspondence with  $Z^3(\opDC_1(\GG))/\delta \opDC^0_1(G_2)$.  
\end{proof}

\begin{lemma} \label{l:dc1arrow}
The set of 1-isomorphisms between two fixed $\GG$-equivariant differential characters is either empty or a torsor for $Z^2(\opDC_1(\GG))/\delta \opDC_1^0(G_1)$.
\end{lemma}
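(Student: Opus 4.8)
The plan is to identify $1$-isomorphisms with primitives, in the total complex $\opDC_1(\GG)$, of the difference of the two characters, and then to read off the torsor structure from the affine (coset) structure of the solution set of an inhomogeneous linear equation. By Lemma \ref{l:dccocycle}, fix the two $\GG$-equivariant differential characters as cocycles $\xi_i = a_i \oplus x_i \oplus y_i \oplus z_i \in Z^3(\opDC_1(\GG))$, $i=1,2$, so that in particular $\partial x_i = da_i$.

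First I would unwind Definition \ref{d:eqdc}(2). A $1$-isomorphism is a pair $(u,[v])$ with $u \in \opDC_1^2(G_0)$ and $v \in \opDC_1^1(G_1)$ (the latter modulo $d\,\opDC_1^0(G_1)$), subject to $z_1-z_2 = du$, the $2$-isomorphism condition relating $y_1-y_2$ to $dv$ and $\partial u$, and the coherence condition that $x_1-x_2-\partial v$ be $d$-exact. The crucial structural point is that $d$ is injective on $\opDC_1^0$: since $\opDC_1^0 = C^0(-;\ZZ)$ forces $h=0$ and $\omega=0$, the formula $d(c,h,\omega)=(dc,\omega-c-dh,d\omega)$ gives $d(c,0,0)=(dc,-c,0)$, which vanishes only if $c=0$. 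Consequently the primitive $a \in \opDC_1^0(G_2)$ of $x_1-x_2-\partial v$ is \emph{uniquely} determined by $v$, so a $1$-isomorphism is precisely a degree-$2$ element $\eta = u\oplus v\oplus a$ of the total complex $\opDC_1(\GG)$ (with the signs fixed by Remark \ref{r:signs}), taken modulo the representative choice in $[v]$. A short computation shows that the three defining conditions say exactly that the degree-$3$, degree-$2$, and degree-$1$ components of $\delta\eta$ equal those of $\xi_1-\xi_2$; the remaining degree-$0$ component matches automatically, since applying $\partial$ to the coherence relation and using $\partial x_i = da_i$ gives $d\partial a = d(a_1-a_2)$, whence $\partial a = a_1-a_2$ by injectivity of $d$. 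Thus $1$-isomorphisms correspond bijectively to $\{\eta : \delta\eta = \xi_1-\xi_2\}$ modulo the ambiguity in $[v]$.

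Next I would pin down that ambiguity. Replacing the representative $v$ by $v+dg$ for $g\in\opDC_1^0(G_1)$ leaves $u$ fixed and, by uniqueness, forces $a$ to change by $-\partial g$; that is, $\eta$ changes by $-\delta g$, where $\delta g = -dg\oplus \partial g$ ranges over $\delta\,\opDC_1^0(G_1)\subseteq Z^2(\opDC_1(\GG))$. Hence the set of $1$-isomorphisms is in bijection with $\{\eta:\delta\eta=\xi_1-\xi_2\}/\delta\,\opDC_1^0(G_1)$. Now the solution set of the inhomogeneous equation $\delta\eta=\xi_1-\xi_2$ is, when nonempty, a coset of the homogeneous solutions $\ker\delta = Z^2(\opDC_1(\GG))$, i.e.\ a torsor for $Z^2(\opDC_1(\GG))$. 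Quotienting this $Z^2$-torsor by the subgroup $\delta\,\opDC_1^0(G_1)$ yields an empty set or a torsor for $Z^2(\opDC_1(\GG))/\delta\,\opDC_1^0(G_1)$, which is the claim.

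The bulk of the work, and the only place signs can go wrong, is the bookkeeping in the second step: matching the sign conventions of Definition \ref{d:eqdc}(2) (inherited from Remark \ref{r:signs}) against the total differential $\delta=(-1)^q d\oplus\partial$, so that the three defining conditions align exactly with the components of $\delta\eta=\xi_1-\xi_2$ and so that the change of representative in $[v]$ matches $\delta g$ on the nose. The single genuinely structural ingredient, as opposed to routine computation, is the injectivity of $d$ on $\opDC_1^0$, which both forces uniqueness of the coherence primitive $a$ and pins the ambiguity of the construction down to exactly $\delta\,\opDC_1^0(G_1)$ rather than a larger subgroup of coboundaries.
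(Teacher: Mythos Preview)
Your proof is correct and follows essentially the same approach as the paper: both identify a $1$-isomorphism with a degree-$2$ cochain $\theta=b\oplus v\oplus u$ in the total complex $\opDC_1(\GG)$ whose differential is (up to signs) the fixed difference of the characters, and both observe that the representative ambiguity in $[v]$ accounts exactly for $\delta\,\opDC_1^0(G_1)$. Your argument is more thorough in one respect: you make explicit the injectivity of $d$ on $\opDC_1^0$ (since $d(c,0,0)=(dc,-c,0)$), which is what guarantees that the coherence primitive $a$ is uniquely determined by $v$ and that the degree-$0$ component of $\delta\eta=\xi_1-\xi_2$ holds automatically---points the paper's terser proof passes over without comment.
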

\begin{proof}
Similar to the previous Lemma, (if it exists) a 1-isomorphism of two fixed $\GG$-equivariant differential characters,  $(u,[v]):(z_1,y_1,[x_1]) \to (z_2,y_2,[x_2])$, amounts to a degree 2 cochain $\theta$ in the total complex $\opDC_1(\GG)$,
$$
\theta = b\oplus v \oplus u \in \opDC_1^0(G_2)\oplus\opDC_1^1(G_1)\oplus\opDC_1^2(G_0),
$$
where $db= x_2-x_1+\partial v$. Changing $v$ to $v+dg$ for some $g \in \opDC^0_1(G_1)$ and $b$ to $b+\partial g$ results in the same 1-isomorphism.  Therefore, (if non-empty) the set of 1-isomorphisms of two fixed $\GG$-equivariant differential characters is a $Z^2(\opDC_1(\GG))/\delta \opDC_1^0(G_1)$-torsor.
\end{proof}

\subsubsection{Relative differential characters}  \label{ss:relDC}
Analogous to Section \ref{ss:reldd}, we define relative differential characters for a Lie groupoid morphism as follows.

\begin{definition} \label{d:relDC}
Let $\PPhi: \mathbf{H} \to \GG$ be a Lie groupoid morphism.
\begin{enumerate}
\item A \emph{relative differential character for $\PPhi$} is a pair $(u, [v]; z, y, [x])$ consisting of a $\GG$-equivariant differential character $(z,y, [x])$ together with a trivialization $(u,[v]): (0,0,[0]) \to \PPhi^*(z,y,[x])$.
\item A \emph{relative 1-isomorphism} $(u, [v]; z, y, [x]) \to (u', [v']; z', y', [x'])$ of relative equivariant differential characters is a pair $(s,[t]; [w])$ consisting of a $\GG$-equivariant 1-isomorphism $(s,[t]):(z,y,[x]) \to (z', y', [x'])$ and an $\mathbf{H}$-equivariant 2 isomorphism $[w]: (u',[v']) + \PPhi^*(s,[t]) \Rightarrow (u,[v])$.
\end{enumerate}
\end{definition}

\subsection{The Dixmier-Douady class} \label{ss:ddclass}

Analogous to the classification of complex line bundles by their Chern class, Dixmier-Douady bundles are classified by a degree 3 cohomology class called the Dixmier-Douady class \cite{raeburn1998morita}.  
In \cite{KrepskiWatts}, it was verified that if $\GG$ is a proper Lie groupoid, then Morita isomorphism classes of $\GG$-equivariant  DD-bundles  are classified by $H^3(\GG;\ZZ)$.  In particular, an equivariant DD-bundle $(\calA,\calE,\tau)$ is Morita trivial if and only if its DD-class $DD(\calA,\calE,\tau) \in H^3(\GG;\ZZ)$ vanishes.  

\begin{remark} \label{remark:DDclassfordc}
Note that the corresponding notion of a DD-class for $\GG$-equivariant differential characters is  automatic.  Indeed, given a differential character $(z,y,[x])$, choose a cocycle $\xi$ in $\opDC_1^3(\GG)$ that represents it (see Lemma \ref{l:dccocycle}) and set $DD(z,y,[x])=[\pr (\xi)]$, where $\pr: \opDC_1^*(\GG) \to C^*(\GG)$ denotes the natural projection. This is indeed the DD-class, since isomorphism classes of differential characters are in bijection with $H^3(\opDC_1(\GG)) \cong H^3(\GG;\ZZ)$ (where the isomorphism is induced by $\pr$---see \cite{KrepskiWatts}). 
\end{remark}

\begin{remark} \label{remark:DDclassviadc}
The equivalence of bicategories \cite{KrepskiWatts}
 \[
	\mathbb{D}: \DC_1^3(\GG) \to \calBBS(\GG) 
 \]
 from $\GG$-equivariant differential characters $\DC_1^3(\GG)$ to $\GG$-equivariant DD bundles $\calBBS(\GG)$ allows for the following description of the DD-class.  Given a $\GG$-equivariant DD-bundle $(\calA,\calE,\tau)$, let $(z,y,[x])$ denote a $\GG$-equivariant differential character such that $\mathbb{D}(z,y,[x])$ is isomorphic to $(\calA,\calE,\tau)$ and set $DD(\calA,\calE,\tau)= [\pr (\xi)]$ as in Remark \ref{remark:DDclassfordc}.
 \end{remark}

We briefly describe the relative version of the DD-class. Let $\PPhi:\mathbf{H} \to \GG$ be a morphism of Lie groupoids, and suppose we are given a relative DD-bundle $(\calF,\alpha;\calA,\calE,\tau)$ for $\PPhi$. 
The relative DD-class $DD(\calF,\alpha;\calA,\calE,\tau)$ is a cohomology class in $H^3(\PPhi;\ZZ)$, the cohomology of the algebraic mapping cone of $\PPhi^*\colon C^*(\GG;\ZZ)\to C^*(\mathbf{H} ;\ZZ)$ (see \cite{weibel1995introduction} for more on relative cohomology and mapping cones).
In particular, $DD(\calF,\alpha;\calA,\calE,\tau)$ is represented by a pair
\[
( \mathbf{b}, \mathbf{c}) \in C^2(\mathbf{H};\ZZ)\oplus C^3(\GG;\ZZ),
\]
with $\delta\mathbf{c}=0$ and $[\mathbf{c}]=DD(\calA,\calE,\tau)$ as well as $\PPhi^*\mathbf{c}=-\delta \mathbf{b}$.  Similar to Remark \ref{remark:DDclassviadc}, we may define the relative DD-class using a corresponding relative $\GG$-equivariant differential character: namely, a differential character $(z,y,[x])$ in $\DC_1^3(\GG)$ and a 1-isomorphism $(u,[v]): (0,0,[0]) \to \PPhi^*(z,y,[x])$.  The relative DD-class is then obtained by setting $\mathbf{b}=\pr(\theta)$  and $\mathbf{c}= \pr(\xi)$, where $\theta$ and $\xi$ are as in the proofs of Lemmas  \ref{l:dc1arrow} and \ref{l:dccocycle}, respectively, and $\pr$ is the natural projection from Remark \ref{remark:DDclassfordc}.
\medskip

By a \emph{real} DD-class, we mean the image of the DD-class under the coefficient homomorphism induced by $\ZZ\hookrightarrow\RR$, which we will denote as $DD_\RR$.  By the de~Rham Theorem, we may also represent such real classes using differential forms, so that $DD_\RR(\calA,\calE,\tau)=[\alpha \oplus \beta\oplus\omega\oplus \eta]$ for some cocycle in the Bott-Shulman-Stasheff complex,
$$
\alpha \oplus \beta \oplus \omega\oplus \eta \in \Omega^0(G_3)\oplus \Omega^1(G_2)\oplus\Omega^2(G_1)\oplus\Omega^3(G_0).
$$
 In this case, we may say that  the $\GG$-equivariant DD-bundle $(\calA, \calE, \tau)$ \emph{represents}  $\alpha \oplus \beta\oplus\omega\oplus \eta$.

Similarly, we may use the de~Rham Theorem to write a real relative DD-class 
\[
DD_\RR(\calF,\alpha;\calA,\calE,\tau)=[(\xi \oplus \mu \oplus \nu,\alpha \oplus \beta\oplus\omega\oplus \eta)]
\]
for some cocycle $\alpha \oplus \beta\oplus\omega\oplus \eta$ as above and total form
\[
\xi \oplus \mu \oplus \nu \in \Omega^0(H_2) \oplus \Omega^1(H_1) \oplus \Omega^2(H_0)
\]
in the Bott-Shulman-Stasheff complex for $\mathbf{H}$, satisfying
\begin{equation} \label{eq:reprel}
\PPhi^*(\alpha \oplus \beta\oplus\omega\oplus \eta)=-\delta(\xi \oplus \mu \oplus \nu).
\end{equation}
  In this case, we may say that $(\calF,\alpha;\calA,\calE,\tau)$  \emph{represents the relation} \eqref{eq:reprel}.

\section{Equivariant prequantization}\label{s:prequant}

In this section we define prequantization for pre-Hamiltonian actions of quasi-presymplectic groupoids in a manner that is analogous to the definition in the quasi-Hamiltonian case \cite{meinrenken2012twisted}. 

\begin{definition} \label{d:preq}
Let $(G_1\rightrightarrows G_0,\omega\oplus \eta)$ be a quasi-presymplectic Lie groupoid, and let $(X,\omega_X,\Phi)$ be a pre-Hamiltonian $\GG$-space. A \emph{prequantization} of $(X,\omega_X,\Phi)$ consists of a relative DD-bundle for $\PPhi$  whose real $DD$-class is $[(\omega_X,\omega\oplus \eta)] \in H^3({\PPhi};\RR)$.  
\end{definition}

In other words, a prequantization of $(X,\omega_X,\Phi)$ is a relative DD-bundle $(\calF,\alpha; \calA, \calE, \tau)$ that represents the pre-Hamiltonian condition $\PPhi^*(\omega\oplus\eta) = -\delta \omega_X$.

\begin{remark}\label{remark:noconnex}
In contrast to other definitions of prequantization appearing in the literature, Definition \ref{d:preq} does not involve a choice of connective structure.  
\end{remark}

As with other treatments of prequantization, an obstruction to the existence of a prequantization can be characterized as an \emph{integrality} condition.  

\begin{definition} 
For a morphism $\PPhi:\mathbf{H} \to \GG$ of Lie groupoids, a relative closed total form $(\alpha,\beta)$ in $\Omega^3(\PPhi)$ is \emph{integral} whenever $[(\alpha,\beta)] \in H^3(\PPhi;\RR)$ is in the image of the coefficient homomorphism $H^3(\PPhi;\ZZ) \to H^3(\PPhi;\RR)$.
\end{definition}

\begin{proposition} \label{p:integrality}
Let $(G_1\rightrightarrows G_0,\omega\oplus \eta)$ be a quasi-presymplectic Lie groupoid, and let $(X,\omega_X,\Phi)$ be a pre-Hamiltonian $\GG$-space. A prequantization of $(X,\omega_X,\Phi)$ exists if and only if $(\omega_X,\omega\oplus \eta)$ is an integral (relative) total form.  
\end{proposition}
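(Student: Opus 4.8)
The plan is to deduce both implications from a single structural fact: the integral relative DD-class induces a surjection from relative Morita-isomorphism classes of relative DD-bundles for $\PPhi$ onto $H^3(\PPhi;\ZZ)$, and the real class is its composite with the coefficient homomorphism $j_*\colon H^3(\PPhi;\ZZ)\to H^3(\PPhi;\RR)$ induced by $j\colon\ZZ\hookrightarrow\RR$. The latter compatibility, $DD_\RR=j_*\circ DD$, is built into the construction of $DD_\RR$ in Section \ref{ss:ddclass}, so the real content lies in the surjectivity of $DD$. Note that the class $[(\omega_X,\omega\oplus\eta)]$ is well-defined in $H^3(\PPhi;\RR)$ precisely because $(X,\omega_X,\Phi)$ is pre-Hamiltonian, i.e.\ $(\omega_X,\omega\oplus\eta)$ is a real mapping-cone cocycle by Definition \ref{d:Ham}.

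The forward implication is then formal. If a prequantization $(\calF,\alpha;\calA,\calE,\tau)$ exists, Definition \ref{d:preq} gives $DD_\RR(\calF,\alpha;\calA,\calE,\tau)=[(\omega_X,\omega\oplus\eta)]$; since this equals $j_*\,DD(\calF,\alpha;\calA,\calE,\tau)$, the class $[(\omega_X,\omega\oplus\eta)]$ lies in the image of $j_*$ and is therefore integral.

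For the converse I would pass to the equivalent differential-character model via the equivalence $\mathbb{D}$ of Remark \ref{remark:DDclassviadc} and its relative counterpart, so that it suffices to realize a given integral class by a relative differential character. Assuming integrality, fix $a\in H^3(\PPhi;\ZZ)$ with $j_* a=[(\omega_X,\omega\oplus\eta)]$. Writing $\opDC_1(\PPhi)$ for the algebraic mapping cone of $\PPhi^*\colon\opDC_1(\GG)\to\opDC_1(\mathbf{H})$, a relative differential character for $\PPhi$ is exactly a degree-$3$ cocycle $(\theta,\xi)$ of $\opDC_1(\PPhi)$ --- with $\xi\in Z^3(\opDC_1(\GG))$ representing the $\GG$-equivariant character and $\theta$ a primitive of $\PPhi^*\xi$ encoding the trivialization, as in the proofs of Lemmas \ref{l:dccocycle} and \ref{l:dc1arrow} --- and its relative DD-class is the image $(\pr\theta,\pr\xi)$ under the projection $\pr$. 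Hence surjectivity of $DD$ reduces to surjectivity of the map $H^3(\opDC_1(\PPhi))\to H^3(\PPhi;\ZZ)$ induced by $\pr$: given such a surjection, a preimage of $a$ is represented by an honest relative differential character, whose image under $\mathbb{D}$ is a relative DD-bundle with real DD-class $j_* a=[(\omega_X,\omega\oplus\eta)]$, i.e.\ a prequantization.

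The main obstacle is establishing that $\pr$ induces a surjection (in fact an isomorphism) on this relative cohomology. The absolute statement --- that $\pr\colon\opDC_1^*\to C^*(-;\ZZ)$ is a quasi-isomorphism, so $H^\bullet(\opDC_1(\GG))\cong H^\bullet(\GG;\ZZ)$ --- is recorded in Remark \ref{remark:DDclassfordc} and taken from \cite{KrepskiWatts}. I would upgrade it to the relative setting by comparing the long exact sequences of the two mapping cones (that of $\PPhi^*$ on $\opDC_1$ and that of $\PPhi^*$ on integral cochains), using the naturality of $\pr$ together with the five lemma to obtain $H^3(\opDC_1(\PPhi))\cong H^3(\PPhi;\ZZ)$. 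A final routine point is to check that the relative counterpart of $\mathbb{D}$ intertwines the two descriptions of the relative DD-class, so that the real class computed in either model agrees; this follows from the compatibility of $\mathbb{D}$ with pullback and with the absolute DD-classes.
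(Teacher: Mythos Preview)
Your argument is correct, and both the forward direction and the surjectivity reduction via the five lemma on the mapping-cone long exact sequences are sound. The route, however, differs from the paper's. The paper proceeds constructively: given an integral cocycle $(\mathbf{b},\mathbf{c})$ together with a real primitive $(\mathbf{g},\mathbf{h})$ witnessing $[(\omega_X,\omega\oplus\eta)]=j_*[(\mathbf{b},\mathbf{c})]$, it writes down by hand the $\GG$-equivariant differential character $(z,y,[x])=((c_0,-h_0,\eta),(c_1,h_1,\omega),[c_2,-h_2,0])$ and its trivialization $(u,[v])=((b_0,g_0,\omega_X),[-b_1,g_1,0])$, verifying the cocycle and coherence conditions directly from the mapping-cone relations. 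Your approach instead packages everything into the single observation that $\pr\colon\opDC_1^*\to C^*(-;\ZZ)$ is a quasi-isomorphism and then transports this to the relative setting formally. What the paper gains is an explicit representative whose form components are literally $\eta$, $\omega$, and $\omega_X$; this explicit shape is reused verbatim in the proofs of Theorems \ref{t:compare}(2) and \ref{t:red}. What your argument gains is conceptual clarity: the proposition is seen to be an immediate corollary of the classification, with no bookkeeping of components, and would go through unchanged for any degree or any other model of gerbes with an analogous $\pr$. The one place you lean on something not stated in the paper is the relative compatibility of $\mathbb{D}$ with $\pr$ and with pullback; this is routine, as you note, since $\mathbb{D}$ is natural and sends trivializations to trivializations, but it is worth flagging as a point the paper does not make explicit.
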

\begin{proof}
It is clear that if a prequantization exists that $(\omega_X,\omega\oplus \eta)$ is an integral relative total form.

Conversely, suppose  $(\omega_X,\omega\oplus \eta)$ is integral.  Then there exists $(\mathbf{b}, \mathbf{c}) \in C^2(\GG\ltimes X;\ZZ)\oplus C^3(\GG;\ZZ)$ satisfying $\delta\mathbf{c}=0$ and  $\PPhi^*\mathbf{c}=-\delta \mathbf{b}$, as well as
\begin{align}
\omega_X -\mathbf{b} &= \delta \mathbf{g} + \PPhi^*\mathbf{h} \label{eq:reldeg2} \\ 
\omega\oplus \eta - \mathbf{c} &= -\delta \mathbf{h} \label{eq:reldeg3}
\end{align}
for some $(\mathbf{g},\mathbf{h}) \in C^2(\PPhi;\RR)$. 
Equation \eqref{eq:reldeg3} says that $\omega\oplus \eta$ is integral and it is straightforward to verify that 
$(z,y,[x])=(c_0,-h_0,\eta; c_1,h_1,\omega; [c_2,-h_2,0])$  defines a  $\GG$-equivariant differential character and hence $\GG$-equivariant DD-bundle $(\calA,\calE,\tau)$ with DD-class equal to $[\mathbf{c}]$, and real DD-class $[\omega\oplus \eta]$.

Equation \eqref{eq:reldeg2}  and the condition that  $(\mathbf{b},\mathbf{c})$ is a relative cocycle gives the following system:
\begin{align*}
\Phi^* c_0&=-db_0 & \Phi^* h_0 &= \omega_X - b_0 -dg_0 \\
\Phi^* c_1&= -\partial b_0 + db_1 & \Phi^* h_1 &= -b_1 -\partial g_0 + dg_1 \\
\Phi^*c_2 &= -\partial b_1 - db_2 & \Phi^* h_2 &=-b_2 -\partial g_1 \\
\Phi^*c_3 &=- \partial b_2 & & 
\end{align*}
Together with the condition $\delta\omega_X + \PPhi^*(\omega \oplus \eta)=0$, we find that 
$$
(u,[v])=(b_0, g_0, \omega_X;[-b_1,g_1,0])
$$
 is a trivialization $(0,0,[0]) \to \PPhi^*(z,y,[x])$, which gives a Morita trivialization 
$$
(\calF,\alpha):  (\CC,\CC,\id) \dashto  \PPhi^*(\calA,\calE,\tau).
$$
  It follows that the relative DD-bundle $(\calF,\alpha;\calA,\calE,\tau)$ is the desired prequantization.
\end{proof}

\subsection{Relation with Laurent-Gengoux \& Xu's definition}\label{ss:compare}

The definition of prequantization in Definition \ref{d:preq} is readily seen to agree with the definition of prequantization for quasi-Hamiltonian $G$-space with $G$-valued moment map \cite{meinrenken2012twisted}.  In this subsection, we show that Definition \ref{d:preq} is consistent with the definition in \cite{laurent2005quantization}, which employs $S^1$-central extensions to model $S^1$-gerbes.  For convenience, we recall some elementary definitions regarding $S^1$-central extensions and  prequantization from \cite{laurent2005quantization}. (For more further on $S^1$-central extensions, the reader may consult \cite{behrend2011differentiable} or \cite{tu2004twisted}.)

\begin{definition} \label{d:s1centext}
   An \emph{$S^1$-central extension} of a Lie groupoid $G_1 \rightrightarrows G_0$ is a Lie groupoid $R\rightrightarrows G_0$ together with a Lie groupoid homomorphism
\[
\xymatrix{
R \ar@<-.7ex>[d] \ar@<+.7ex>[d] \ar[r]^{\pi} & G_1 \ar@<-.7ex>[d] \ar@<+.7ex>[d]\\
G_0 \ar@{=}[r] & G_0
}
\]
with the property that $\pi$ is a (left) principal $S^1$-bundle and the principal $S^1$-action on $R$ is compatible with the groupoid multiplication on $R$: $(s\cdot x)(t\cdot y) = st \cdot (xy)$ for all $s,t \in S^1$ and $(x,y) \in R_2=R\times_{G_0} R$.  Such an $S^1$-central extension is denoted by $R\to G_1 \toto G_0$.
\end{definition}

Recall from \cite{behrend2011differentiable} that associated to an $S^1$-central extension $R\to G_1 \toto G_0$ is a DD-class $DD(R)$ in $H^3(\GG;\ZZ)$, which by  \cite[Proposition 4.7]{behrend2011differentiable} pulls back along $\epsilon_{\GG}$ to zero in $H^3(G_0;\ZZ)$.  In particular, if $(\GG,\omega\oplus \eta)$ is a quasi-presymplectic groupoid and $R\to G_1 \toto G_0$ is an $S^1$-central extension whose real DD-class is $[\omega\oplus \eta]$, then $\eta$ must be exact.

For a quasi-presymplectic groupoid $(\GG,\omega\oplus \eta)$ with exact 3-form $\eta$, the authors in \cite{laurent2005quantization} define a prequantization of a Hamiltonian $\GG$-space $(X,\omega_X,\Phi)$ as an $S^1$-central extension $R\to G_1 \toto G_0$ of $\GG$ together with  a principal $S^1$-bundle $p:L\to X$ equipped with an $R$-action satisfying $(s\cdot r) (t \cdot y) = st \cdot (ry)$ for all $s,t \in S^1$ and $(r,y)\in R\times_{G_0} L$.  Theorem \ref{t:compare}(1) below shows that this definition is compatible with Definition \ref{d:preq}.  

The proof of Theorem \ref{t:compare} requires the following lemma, which describes the coherence condition for 1-isomorphisms of equivariant DD-bundles (see Definition \ref{d:eqDD}(2)) in the special case of a trivialization.  Note that the statement of the lemma implicitly abuses notation in the spirit of Remark \ref{r:reorder}.

\begin{lemma} \label{l:coherence_triv}
Let $\GG$ be a Lie groupoid, and let $(\calA,\calE,\tau)$ be a $\GG$-equivariant DD-bundle.  A trivialization $(\calG,\beta):\epsilon^*(\calA,\calE,\tau) \dashto (\CC,\CC,\id)$ gives rise to an equality of 2-isomorphisms $\beta = \epsilon^*\tau$.  
\end{lemma}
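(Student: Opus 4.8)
The plan is to pull the equivariant structure back along the unit map, read the trivialization as a unit-type datum in the monoidal bicategory of Morita self-isomorphisms of $\calA$, and extract the claim from the coherence condition of Definition~\ref{d:eqDD}(2) by cancelling invertible $2$-isomorphisms.

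First I would make the pullback explicit. Regarding the unit as a Lie groupoid morphism $\epsilon\colon (G_0\toto G_0)\to\GG$, the induced simplicial map has $\epsilon_1=\epsilon$ and $\epsilon_2(x)=(\epsilon(x),\epsilon(x))$. Since $\partial_0\epsilon=\partial_1\epsilon=\id_{G_0}$ and $\partial_i\epsilon_2=\epsilon$ for $i=0,1,2$, the pullback $\epsilon^*(\calA,\calE,\tau)$ is the $(G_0\toto G_0)$-equivariant DD-bundle $(\calA,\calE_0,\epsilon^*\tau)$, where $\calE_0:=\epsilon^*\calE$ is a Morita automorphism of $\calA$ over $G_0$ and $\epsilon^*\tau\colon\calE_0\otimes_{\calA}\calE_0\to\calE_0$. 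The $G_3$-coherence of $\tau$ restricts to associativity of $\epsilon^*\tau$, and since $\calE_0$ is invertible, whiskering $\epsilon^*\tau$ by $\calE_0^*$ produces a canonical trivialization $\calE_0\cong\calA$; it is this trivialization that the statement denotes $\epsilon^*\tau$, in the spirit of Remark~\ref{r:reorder}.

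Next I would unwind Definition~\ref{d:eqDD}(2) for $(\calG,\beta)$ over the trivial groupoid $G_0\toto G_0$. Because the target $(\CC,\CC,\id)$ has identity arrow with $\tau'=\id$, and because every face map of $G_0\toto G_0$ is the identity, the datum $\beta$ becomes a $2$-isomorphism $\beta\colon\calG\to\calG\otimes_{\calA}\calE_0$, and (tracking the bimodule factors through the simplicial identities as in Remark~\ref{remark:simp}) the coherence condition collapses to
\[
(\id_{\calG}\otimes\epsilon^*\tau)\circ(\beta\otimes\id_{\calE_0})\circ\beta=\beta .
\]
Conjugating by the invertible Morita isomorphism $\calG$ (whiskering on the left by $\calG^*$ and using $\calG^*\otimes_{\CC}\calG\cong\calA$) transports this into an equation between $2$-isomorphisms $\calA\to\calE_0$: writing $b$ for the resulting reinterpretation of $\beta$, it reads $\epsilon^*\tau\circ(b\otimes\id_{\calE_0})\circ b=b$. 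Since $b$ is invertible, cancellation on the right yields $\epsilon^*\tau\circ(b\otimes\id_{\calE_0})=\id_{\calE_0}$, which is exactly the left-unit relation for the multiplication $\epsilon^*\tau$. Cancelling the invertible $\epsilon^*\tau$ then determines $b$ uniquely, and uniqueness identifies it with the canonical trivialization constructed above; unwinding the reinterpretation gives $\beta=\epsilon^*\tau$.

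The main obstacle is bookkeeping rather than conceptual: one must follow the bimodule structures through the simplicial identities to confirm that the collapsed coherence condition has precisely the displayed form, and one must fix the conventions of Remark~\ref{r:reorder} so that $\beta$ and $\epsilon^*\tau$ are genuinely read as $2$-isomorphisms of the same type. Once the left-unit relation is in hand, the conclusion follows immediately from the invertibility of $\beta$, $\calG$, and $\epsilon^*\tau$.
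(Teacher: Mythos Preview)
Your proposal is correct and follows essentially the same route as the paper: both arguments pull the coherence condition of Definition~\ref{d:eqDD}(2) back along the unit, collapse it over the trivial groupoid $G_0\toto G_0$ (where $\calE'=\CC$ and $\tau'=\id$), and finish by cancelling an invertible $2$-isomorphism. The paper draws the pentagon and compares the two reinterpreted compositions explicitly, while you phrase the collapsed coherence as a left-unit relation and invoke uniqueness of units; these are the same argument in different dress.
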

\begin{proof}
The coherence condition from Definition \ref{d:eqDD}(2) amounts to the following commutative diagram of bundle isomorphisms over $G_0$:

\[
\xymatrixcolsep{2pc}
\xymatrix{
	    & \epsilon^*\calE \otimes \epsilon^*\calE \otimes \calG \ar[rr]^-{\id\otimes \beta} \ar[dl]_-{\epsilon^*\tau \otimes\id} &		& \epsilon^*\calE\otimes \calG \otimes \CC \ar[dr]^-{\beta \otimes \id}	&	\\
\epsilon^*\calE \otimes \calG \ar[drr]_-{\beta} &			&		&		& \calG \otimes \CC\otimes \CC \ar[dll] \\
	    &			& \calG \otimes \CC		&		&	\\
}
\]
where the unlabelled arrow is a canonical isomorphism.  Hence, the diagram
\[
\xymatrix{
\epsilon^*\calE \otimes \epsilon^*\calE \otimes \calG \ar[r]^-{\id\otimes \beta} \ar[d]_-{\epsilon^*\tau \otimes\id} 	& \epsilon^*\calE\otimes \calG \otimes \CC  \\
\epsilon^*\calE \otimes \calG \ar[ur]
}
\]
commutes, where the unlabelled arrow is a canonical isomorphism.
Therefore,  the 2-isomorphism $\beta$ (see Remark \ref{r:reorder}), which may be written as the composition
\[
\xymatrix{
\epsilon^*\calE  \ar[r]  & \epsilon^*\calE^* \otimes \epsilon^*\calE \otimes \epsilon^*\calE \otimes \calG \otimes \calG^* \ar[rr]^-{\id\otimes \id \otimes \beta \otimes \id}  & & \epsilon^*\calE^* \otimes \epsilon^*\calE\otimes \calG \otimes \CC  \otimes \calG^* 
\ar@(d,u)[]!<10ex,-2ex>;[dll]!<1ex,2ex>\\
& \epsilon^*\calE^* \otimes \epsilon^*\calE \otimes \calG \otimes \calG^* \ar[r]  & \calA &
}
\]
(where unlabelled arrows are canonical isomorphisms)
coincides with $\epsilon^*\tau$, which may be written as the composition
\[
\xymatrix{
\epsilon^*\calE  \ar[r]  & \epsilon^*\calE^* \otimes \epsilon^*\calE \otimes \epsilon^*\calE \otimes \calG \otimes \calG^* \ar[rr]^-{\id\otimes \epsilon^*\tau\otimes \id \otimes \id}  
& & \epsilon^*\calE^* \otimes \epsilon^*\calE \otimes \calG \otimes \calG^* \ar[r]  & \calA
}
\]
(where unlabeled arrows are canonical isomorphisms),
as required.
\end{proof}

\begin{theorem} \label{t:compare}
Let $(\GG,\omega\oplus \eta)$ be a quasi-presymplectic groupoid, and let $(X,\omega_X,\Phi)$ be a pre-Hamiltonian $\GG$-space.  Suppose that $(\calF,\alpha ; \calA, \calE, \tau)$ is a prequantization of $(X,\omega_X,\Phi)$ and that $\epsilon_{\GG}^*(\calA,\calE,\tau)$ admits a trivialization over $G_0$ (viewed as a trivial groupoid).  Then 
\begin{enumerate}
	\item there exists an $S^1$-central extension $R\stackrel{\pi}{\longrightarrow} G_1 \toto G_0$ and a principal $S^1$-bundle $p:L\to X$ equipped with an $\mathbf{R}$-action with anchor $\Phi\circ p$, satisfying $(s\cdot r) (t \cdot y) = st \cdot (ry)$ for all $s,t \in S^1$ and $(r,y)\in R\times_{G_0} L$;
	\item the $\mathbf{R}$-equivariant curvature class of $L$ is $[(\Phi^* \beta_0 -\omega_X) \oplus \Phi^*\theta] \in H^2(\mathbf{R}\ltimes X;\RR)$, where $\beta_0$ is a primitive of $\eta$ and $\theta$ is a primitive of the curvature form of $\pi:R \to G_1$ pulled back along $\pi$.
\end{enumerate}
\end{theorem}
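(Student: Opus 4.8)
The plan is to prove both parts by transporting the bicategorical data of the prequantization through the hypothesized unit trivialization, thereby reducing the $\PU(\calH)$-structure of the underlying gerbe to an honest $S^1$-structure. First I would exploit the trivialization of $\epsilon_\GG^*(\calA,\calE,\tau)$ over $G_0$. Since the unit map is the identity on objects, this trivialization in particular Morita-trivializes $\calA$, supplying a Hilbert bundle $\mathcal{V}\to G_0$ with $\calA\cong\KK(\mathcal{V})$; in de~Rham terms its real $DD$-class $[\eta]\in H^3(G_0;\RR)$ vanishes, so $\eta=d\beta_0$ for a primitive $\beta_0$ (consistent with the discussion following Definition~\ref{d:s1centext} that an $S^1$-central extension of real $DD$-class $[\omega\oplus\eta]$ forces $\eta$ exact \cite{behrend2011differentiable}).

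Next I would conjugate the remaining data by $\calG$. The Morita isomorphism $\calE\colon\partial_0^*\calA\dashto\partial_1^*\calA$ becomes a Morita self-isomorphism of $\CC$ over $G_1$, that is, a Hermitian line bundle $\mathcal{L}\to G_1$, and I set $R:=\mathcal{L}^\times$ with $\pi\colon R\to G_1$ its circle projection. The $2$-isomorphism $\tau$ transports to a bundle map $\partial_2^*\mathcal{L}\otimes\partial_0^*\mathcal{L}\to\partial_1^*\mathcal{L}$ over $G_2$, i.e.\ a multiplication $R\times_{G_0}R\to R$; its $S^1$-bilinearity is precisely $(s\cdot x)(t\cdot y)=st\cdot(xy)$, the coherence condition of Definition~\ref{d:eqDD}(1) gives associativity, and Lemma~\ref{l:coherence_triv} (which yields $\beta=\epsilon^*\tau$ for the unit trivialization) supplies the units, so that $R\to G_1\toto G_0$ is an $S^1$-central extension in the sense of Definition~\ref{d:s1centext}. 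Conjugating the relative datum $(\calF,\alpha)$ in the same way turns $\calF$ into a Hermitian line bundle $\mathcal{M}\to X$, whose circle bundle $L:=\mathcal{M}^\times\to X$ is the desired principal $S^1$-bundle; the relative $2$-isomorphism $\alpha$ transports to an isomorphism $\mathcal{L}\otimes\partial_0^*\mathcal{M}\to\partial_1^*\mathcal{M}$ over $(\GG\ltimes X)_1=G_1\times_{G_0}X$ (with $\mathcal{L}$ pulled back along the projection to $G_1$), which is an $R$-action $R\times_{G_0}L\to L$ with anchor $\Phi\circ p$; bilinearity again gives $(s\cdot r)(t\cdot y)=st\cdot(ry)$ and the coherence of $\alpha$ gives the action axioms. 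This establishes part~(1).

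For part~(2) I would choose connections on $\mathcal{L}$ and $\mathcal{M}$, writing $\theta\in\Omega^1(R)$ for the connection $1$-form on $R$ (so $d\theta=\pi^*F$ with $F\in\Omega^2(G_1)$ the curvature) and $F_{\mathcal{M}}\in\Omega^2(X)$ for the curvature of $\mathcal{M}$. The $\mathbf{R}$-equivariant curvature of $L$ is then represented by a $\delta$-closed total $2$-form $F_{\mathcal{M}}\oplus\psi$ in the Bott--Shulman--Stasheff complex of $\mathbf{R}\ltimes X$, with $\psi\in\Omega^1((\mathbf{R}\ltimes X)_1)$, whose class in $H^2(\mathbf{R}\ltimes X;\RR)$ is independent of the choices. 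It remains to match this class with the de~Rham representatives of the real relative $DD$-class, which by Definition~\ref{d:preq} is $[(\omega_X,\omega\oplus\eta)]$: feeding in $\beta_0$ to gauge away the exact $3$-form $\eta$ and invoking the pre-Hamiltonian relation $\PPhi^*(\omega\oplus\eta)=-\delta\omega_X$ of Definition~\ref{d:Ham} identifies $F_{\mathcal{M}}$ with $\Phi^*\beta_0-\omega_X$ and $\psi$ with $\Phi^*\theta$, the required closedness being exactly that relation together with $d\beta_0=\eta$.

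The main obstacle is the bookkeeping of the second paragraph: transporting the weak $2$-categorical data $\tau$ and $\alpha$ through the Morita trivializations to strict line-bundle cocycles and verifying every groupoid and action axiom---most delicately the unit normalization of $R$, where Lemma~\ref{l:coherence_triv} is indispensable, and the $S^1$-bilinearity producing the central-extension compatibility conditions. Once $R$, $\mathcal{L}$, $L$, and $\mathcal{M}$ are in hand with chosen connections, part~(2) reduces to a Chern--Weil computation against the prescribed de~Rham representatives and is comparatively routine.
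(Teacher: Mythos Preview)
Your construction for part~(1) is essentially the paper's: both form the line bundle $\mathcal{R}=\partial_1^*\calG^*\otimes_{\partial_1^*\calA}\calE\otimes_{\partial_0^*\calA}\partial_0^*\calG$ over $G_1$ (your $\mathcal{L}$) and $\mathcal{L}=\calF^*\otimes_{\Phi^*\calA}\Phi^*\calG$ over $X$ (your $\mathcal{M}$), use $\tau$ for the groupoid multiplication on $R$, and use $\alpha$ to define the $\mathbf{R}$-action on $L$. One point of emphasis: in the paper Lemma~\ref{l:coherence_triv} is applied not to produce units for $R$ itself but---twice, to both $(\calG,\beta)$ and $(\calF,\alpha)$---to show that $\epsilon^*_{\GG\ltimes X}\alpha$ and $\Phi^*\beta$ both coincide with the appropriate pullback of $\tau$, which forces units of $\mathbf{R}$ to act trivially on $L$. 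Your sketch underplays this; the paper in fact spends most of the proof of~(1) on the two coherence diagrams needed for the unit and associativity axioms of the action.

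For part~(2) the paper takes a different and more explicit route. Rather than choosing connections and running Chern--Weil, it invokes the equivalence between equivariant DD-bundles and equivariant differential characters (degree~3 from \cite{KrepskiWatts}, degree~2 from \cite{lerman2008differential}) and writes everything as cocycles in $\opDC_1^*$: the prequantization supplies $((B_0,F_0,\omega_X),[B_1,F_1,0])$, the unit trivialization supplies $((b_0,g_0,\beta_0),[b_1,g_1,0])$, and $R$ and $L$ are then \emph{represented} by the explicit degree-2 characters $(\partial b_0+c_1,\partial g_0+h_1,\partial\beta_0+\omega)$ and $(\Phi^*b_0-B_0,\Phi^*g_0-F_0,\Phi^*\beta_0-\omega_X)$. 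The $\mathbf{R}$-equivariant extension of $L$ is then found by exploiting the \emph{canonical} trivialization of the pullback extension $\pi^*R\to R$ and its naturality under groupoid morphisms; this yields the primitive $r\in\opDC^1_1(R)$ whose $\Omega^1(R)$-component is the $\theta$ in the statement. Your Chern--Weil sketch has a gap here: an arbitrary connection on $\mathcal{M}$ has curvature representing only the non-equivariant Chern class of $L$, and you have not explained the mechanism that identifies that class with $[\Phi^*\beta_0-\omega_X]$, nor how to pass from the prescribed class in $H^3(\PPhi;\RR)$ to the desired class in $H^2(\mathbf{R}\ltimes X;\RR)$. The differential-character bookkeeping is precisely what supplies that identification at the cochain level, and without it (or an equivalent choice of connective structure and curving on the DD data, not just connections on the resulting line bundles) the matching you assert is not justified.
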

\begin{proof}[Proof of (1)]
Let $(\calG,\beta):(\CC,\CC,\id) \dashto \epsilon_{\GG}^*(\calA,\calE,\tau)$ be a trivialization over $G_0$.  Consider the composition $\CC \dashto \partial_0^*\calA \dashto \partial_1^*\calA \dashto \CC$ given by the line bundle
$
\mathcal{R} = \partial_1^*\calG^{*} \otimes_{\partial_1^*\calA} \calE \otimes_{\partial_0^*\calA} \partial_0^*\calG
$
over $G_1$.
Let $R\to G_1$ be its associated principal $S^1$-bundle.  Then $R\to \G_1 \toto G_0$ is an $S^1$-central extension of $\GG$.

Let $(\calF,\alpha)$ be the trivialization of $\Phi^*(\calA,\calE,\tau)$ given by the prequantization.
Consider the composition $\CC \dashto \Phi^*\calA \dashto \CC$ given by the line bundle
$
\mathcal{L}=\calF^* \otimes_{\Phi^*\calA} \Phi^*\calG
$
over $X$.  Let $p:L\to X$ be its associated principal $S^1$-bundle.

The 2-isomorphism $\alpha$  (see Remark \ref{r:reorder}) in the diagram 
\[
\xymatrix{
\CC \ar@{-->}[rr]^-{\Phi^*\partial_0^* \calG} && \partial_0^* \Phi^* \calA \ar@{-->}[rr]^{\Phi^*\calE} \ar@{-->}[dr]_{\partial_0^*\calF^*} \rrtwocell<\omit>{<3>\alpha}&  & \partial_1^* \Phi^* \calA \ar@{-->}[rr]^-{\Phi^*\partial_1^* \calG^*} && \CC \\
&& & \CC \ar@{-->}[ur]_{\partial_1^*\calF} & & &
}
\]
gives rise to a bundle isomorphism  $\Phi^*\mathcal{R} \cong \partial \mathcal{L}=\partial_0^*\mathcal{L} \otimes \partial_1^* \mathcal{L}^*$, or equivalently $\Phi^*R \cong \partial L$.  We verify below that this isomorphism, which will be denoted by $\psi:\Phi^*R=R\times_{G_0} X \to \partial L$, defines an action 
$$
R\times_{G_0} L \to L, \quad (r,y) \mapsto r\cdot y,
$$
with anchor map $\Phi\circ p:L \to G_0$.

 It is straightforward to verify that 
 \begin{align*}
\partial L 
&= \{(\gamma, x, y_0, y_1) \in G_1\times X \times L \times L\, | \, x=p(y_0), \gamma\cdot x = p(y_1)\} \big/ \sim
 \end{align*}
 where $(\gamma,x,y_0,y_1) \sim (\gamma,x,\lambda y_0, \lambda y_1)$ for $\lambda \in S^1$. 
 (We shall use brackets $[\,\cdots]$ to denote $\sim$-equivalence classes.)
 Given $(r,y)\in R\times_{G_0} L$, define $r\cdot y \in L$ by the equality
 \[
 \psi (r,p(y)) = [\pi(r),p(y),y,r\cdot y].
 \]
This is well-defined since the action of $S^1$ on $L$ is free.  
We check that this formula defines an action of $\mathbf{R}$ on $L$ with anchor $\Phi  p$.  That $\Phi p(r\cdot y)=\partial_1(r)$ follows immediately from the fact that $\GG$ acts on $X$: since $p(r\cdot y) = \pi(r) \cdot p(y)$, and hence $\Phi p(r\cdot y) = \Phi(\pi(r) \cdot p(y)) = \partial_1(\pi(r)) = \partial_1(r)$.  

Recall that the unit map $\epsilon_{\mathbf{R}}:G_0 \to R$  can be viewed as a section of the $S^1$-bundle $\epsilon_{\GG}^*R\to G_0$.
We show next that the units of $\mathbf{R}$ act trivially.  This will follow from the commutative diagram \eqref{d:compat} below of $S^1$-bundles over $X$.  (The commutativity of the diagram will be shown subsequently.)
\begin{equation} \label{d:compat}
\xymatrix{
\epsilon_{\GG\ltimes X}^*\Phi^*R \ar[r]^{\epsilon_{\GG\ltimes X}^*\psi} \ar@{=}[d] & \epsilon_{\GG\ltimes X}^*\partial L \ar@{=}[d] \\
\Phi^*\epsilon_{\GG}^*R \ar[r] & X\times S^1
}
\end{equation}
The vertical maps marked as equalities are canonical isomorphisms.  The canonical isomorphism on the right is 
\[
[\epsilon_{\GG}(\Phi(x)),x,y_0,y_1] \mapsto (x,\lambda (y_0,y_1))
\]
 where $\lambda(y_0,y_1) \in S^1$ is defined by $y_1=\lambda(y_1,y_1)y_0$.  The trivialization $\Phi^*\epsilon_{\GG}^*R \to X\times S^1$ sends $(r,x) \mapsto \lambda(w(\Phi(x)),r)$. 
 
 To see that \eqref{d:compat} guarantees that the units act trivially, let $a\in G_0$, and consider $\epsilon_{\mathbf{R}}(a) \in R$ and $y\in L$ satisfying $\Phi(p(y))=\partial_0 (\epsilon_{\mathbf{R}}(a))=a$.  Then $(\epsilon_{\mathbf{R}}(a),p(y)) \in \epsilon_{\GG\ltimes X}^*\Phi^*R = \Phi^*\epsilon_{\GG}^*R$ and the commutativity of the diagram forces
 \[
 \lambda(y,\epsilon_{\mathbf{R}}(a)\cdot y)= \lambda(w(\Phi(p(y)),\epsilon_{\mathbf{R}}(a)))=\lambda(\epsilon_{\mathbf{R}}(a),\epsilon_{\mathbf{R}}(a))=1,
 \]
 as required.

We now verify the commutativity of \eqref{d:compat}, which ultimately relies on  the coherence conditions (see Definition \ref{d:eqDD}) satisfied by the trivializations $(\calF,\alpha)$ and $(\calG,\beta)$.   Omitting certain canonical isomorphisms, diagram \eqref{d:compat} may be rewritten as
\begin{equation} \label{d:compat2}
\xymatrixcolsep{5pc}
\xymatrix{
\Phi^* \calG^* \otimes_{\Phi^*\calA} \epsilon_{\GG\ltimes X}^*\Phi^* \calE \otimes_{\Phi^*\calA} \Phi^*\calG \ar[r]^-{\id\otimes \epsilon_{\GG\ltimes X}^*\alpha \otimes \id} \ar@{=}[d] & 
\Phi^* \calG^* \otimes_{\Phi^*\calA} \calF \otimes_{\CC} \calF^* \otimes_{\Phi^*\calA} \Phi^*\calG \ar@{=}[d] \\
\Phi^* \calG^* \otimes_{\Phi^*\calA} \Phi^* \epsilon_{\GG}^*\calE \otimes_{\Phi^*\calA} \Phi^*\calG \ar[r] & \CC
}
\end{equation}
where the vertical maps marked as equalities are canonical isomorphisms. The lower horizontal map is the composition,
\[
\xymatrixcolsep{5pc}
\xymatrix{
\Phi^* \calG^* \otimes_{\Phi^*\calA} \Phi^* \epsilon_{\GG}^*\calE \otimes_{\Phi^*\calA} \Phi^*\calG \ar[r]^-{\id\otimes \Phi^*\beta \otimes \id} & \Phi^* \calG^* \otimes_{\Phi^*\calA}  {\Phi^*\calA}  \otimes_{\Phi^*\calA} \Phi^*\calG \cong \CC.
}
\]

Recall from Definition \ref{d:eqDD} that the 2-isomorphisms $\epsilon_{\GG\ltimes X}^*\alpha$ and $\Phi^*\beta$ must satisfy certain coherence conditions.  Here---see Remark \ref{r:reorder}---they are being viewed as 2-isomorphisms
\[
\epsilon_{\GG\ltimes X}^*\alpha: \epsilon_{\GG\ltimes X}^*\Phi^*\calE \to  \calF \otimes_{\CC} \calF^*, \quad \text{and}\quad \Phi^*\beta: \Phi^*\epsilon_{\GG}^*\calE \to \Phi^*\calG \otimes_\CC \Phi^*\calG^* \cong \Phi^*\calA. 
\]
Writing the 2-isomorphisms $\Phi^*\epsilon_{\GG}^*\tau$ and $\epsilon_{\GG\ltimes X}^*\Phi^*\tau$ as 
\[
\Phi^*\epsilon_{\GG}^*\tau: \Phi^*\epsilon_{\GG}^*\calE \to \Phi^*\calA, \quad \text{and}\quad \epsilon_{\GG\ltimes X}^*\Phi^*\tau: \epsilon_{\GG\ltimes X}^*\Phi^* \calE \to \Phi^* \calA,
\]
the corresponding coherence conditions are
\[
\Phi^*\epsilon_{\GG}^*\tau = \Phi^*\beta, \quad \text{and}\quad \epsilon_{\GG\ltimes X}^*\Phi^*\tau =\epsilon_{\GG\ltimes X}^* \alpha,
\]
by Lemma \ref{l:coherence_triv}.  Therefore, the trivializations   in diagram \eqref{d:compat2} coincide (i.e.~the diagram commutes) because each is a trivialization that is compatible with $\Phi^*\epsilon_{\GG}^*\tau=\epsilon_{\GG\ltimes X}^*\Phi^*\tau$.

Next, we show that  $r_0\cdot (r_1\cdot y) = (r_0r_1)\cdot y$ for compatible $r_0,r_1 \in R$, $y\in L$.  As will be verified next, this follows from the commutativity of the diagram below \[
 \xymatrix{
 \partial \Phi^*R \ar[r]^{\partial \psi} \ar@{=}[d] & \partial^2L \ar@{=}[d] \\
 \Phi^*\partial R \ar[r] & (\GG\ltimes X)_2\times S^1
 }
 \]
  (where the equalities are canonical isomorphisms).  (The commutativity of the diagram follows by an argument similar to the one given above for diagram \eqref{d:compat}, ultimately relying on the coherence condition satisfied by the trivialization $(\calF,\alpha)$.)

A straightforward computation gives the identifications
  \[
  \partial \Phi^* R = \Phi^*\partial R = \{ 
  (r_0,r_1,r,x) \, \vert \, \pi(r_0)\pi(r_1)=\pi(r), \partial_0 (r_1)=\Phi(x)
  \} \big/ \sim
  \]
  where $(r_0,r_1,r,x) \sim (\lambda r_0,  r_1, \lambda r, x) \sim (r_0, \mu r_1, \mu r, x)$, for $\lambda,\mu \in S^1$, and 
  \[
  \partial^2L = \left\{ 
  (\gamma_0, \gamma_1, x; y_0, y_1, y_0', y_1', y_0'', y_1'') \, \left|
\begin{array}{c}
  \partial_0 (\gamma_1) = \Phi(x) ,\, \partial_0 (\gamma_0) = \Phi(\gamma_1\cdot x) \\
  x=p(y_0)=p(y_0') \\
  \gamma_1 \cdot x = p(y_1)=p(y_0'') \\
  \gamma_0\gamma_1 \cdot x = p(y_1')=p(y_1'') 
\end{array}
\right.
  \right\} \Big/ \sim
  \]
  where
\[
  (\gamma_0, \gamma_1, x; y_0, y_1, y_0', y_1', y_0'', y_1'') \sim 
  (\gamma_0, \gamma_1, x; \lambda y_0, \lambda \alpha y_1, \mu y_0',\mu \alpha \beta  y_1', \nu y_0'', \nu \beta y_1'') 
\]
  for $\lambda,\mu,\nu,\alpha,\beta \in S^1$.
For $[r_0,r_1,r_0r_1,p(y)] \in \partial \Phi^*R$, we compare the two ways of going around the above commutative diagram:
\[
\xymatrix{
[r_0,r_1,r_0r_1,p(y)] \ar@{|->}[r] \ar@{|->}[dd] & [\pi(r_0), \pi(r_1),p(y); y, r_1 \cdot y, y, (r_0r_1)\cdot y, r_1\cdot y, r_0\cdot(r_1\cdot y)] \ar@{|->}[d] \\
 & (\pi(r_0), \pi(r_1), p(y); \lambda((r_0r_1)\cdot y, r_0\cdot (r_1 \cdot y))) \ar@{=}[d] \\
 [r_0,r_1,r_0r_1,p(y)] \ar@{|->}[r] & (\pi(r_0), \pi(r_1), p(y); \underbrace{\lambda(r_0r_1, r_0 r_1)}_{=1}) 
}
\]  
(where $\lambda(-,-)$ is defined analogously as above) and hence $r_0\cdot (r_1\cdot y) = (r_0r_1)\cdot y$, as required.

Finally, that the $\mathbf{R}$-action is compatible with the $S^1$-action, as stated in the Theorem, follows from the fact that $\psi$ is a bundle map.
\end{proof}

\begin{proof}[Proof of (2)]
In addition to making use of the equivalence (of bicategories) between equivariant DD-bundles and equivariant differential characters of degree 3 \cite{KrepskiWatts}, this proof also makes use of the degree 2 version shown in \cite{lerman2008differential} for principal $S^1$-bundles (the reader is referred there for details).  Briefly, we recall that for a Lie groupoid $H_1 \toto H_0$,   the category of $\mathbf{H}$-equivariant $S^1$-bundles is equivalent to the category $\DC_1^2(\mathbf{H})$ of $\mathbf{H}$-equivariant differential characters of degree 2.  An object in $\DC_1^2(\mathbf{H})$ is  a pair $(z,[y])$ consisting of a differential character (i.e. cocycle) $z \in \opDC_1^2(H_0)$ and a 1-isomorphism $[y]:\partial_0^*z \to \partial_1^*z$ (i.e.~$y \in  \opDC^1_1(H_1)$ is a primitive of $\partial_1^*z-\partial_0^*z$). (In this case, two 1-isomorphisms whose difference is exact are considered equal.)  Moreover, $y$ must satisfy a coherence condition that $\partial y$ is exact.

Let $\zeta=(\zeta_0, \zeta_1, [\zeta_2])=((c_0,h_0,\eta),(c_1,h_1,\omega),[c_2,h_2,0])$ be a differential character (of degree 3) corresponding to $(\calA,\calE,\tau)$, and let $((b_0,g_0,\beta_0),[b_1,g_1,0]):0\to \epsilon_{\GG}^*\zeta$ be a trivialization.  Note that one of the defining relations of this trivialization is that $\eta = d\beta_0$.

Let  $((B_0,F_0,\omega_X),[B_1,F_1,0]):0\to \Phi^*\zeta $ be a prequantization.  
Then it is straightforward to verify that the principal $S^1$-bundle $\pi:R\to G_1$ may be represented by the differential character of degree 2 (also denoted by $R$),
\[
R=(\partial b_0 +c_1, \partial g_0+h_1, \partial \beta_0+\omega),
\]
and that the $S^1$-bundle $p:L\to X$ may be represented by the differential character of degree 2 in $\DC_1^2(X)$ (also denoted by $L$),
\[
L=(\Phi^*b_0-B_0,\Phi^*g_0-F_0,\Phi^*\beta_0-\omega_X).
\]
This immediately gives the non-equivariant curvature class of $L$ as $[\Phi^*\beta_0-\omega_X]$.  

Next we will find an equivariant extension for $L$ in $\DC_1^2(\mathbf{R}\ltimes X)$.  That is, we will find a 1-isomorphism $[y]:\partial_0^*L \to \partial_1^*L$ (so that $\partial L = dy$) satisfying the coherence condition that $\partial y$ is exact.  This will exhibit the desired object $(L,[y])$ in $\DC_1^2(\mathbf{R}\ltimes X)$.

To introduce some notation, consider the commutative diagram of $S^1$-central extensions below.
\[
\xymatrix{
R \times_{G_0} X \ar[r]^-{\widehat{\Phi}} \ar[d]_{\pi'} & R \ar[d]^\pi \\
G_1 \times_{G_0} X \ar[r]^-{\Phi_1}  \ar@<-.7ex>[d] \ar@<+.7ex>[d] & G_1  \ar@<-.7ex>[d] \ar@<+.7ex>[d] \\
X \ar[r]^{\Phi} & G_0
}
\]

Since
\begin{align*}
\partial_{\mathbf{R} \ltimes X} L &= (\pi')^*\partial_{\GG\ltimes X} L  \\
&= (\pi')^* ({\Phi}_1^*R + d(B_1,F_1, 0)) \\
&= \widehat{\Phi}^*(\pi^*R) + (\pi')^*(d(B_1, F_1, 0)), 
\end{align*}
it would suffice to find a primitive $r$ with $\pi^*R=dr$ such that $\partial_{\mathbf{R}\ltimes X}(\widehat{\Phi}^*r  + (\pi')^*(B_1,F_1,0))$ is exact.  (In this case, we set $y=\widehat{\Phi}^*r+(\pi')^*(B_1,F_1,0)$ to get the data needed for the desired  equivariant extension $(L,[y])$.)  We verify below that such a primitive exists.

Since the pullback $S^1$-central extension $\pi^*R \to R \toto G_0$ is canonically trivial, it follows that $\pi^*\zeta$ is canonically trivial as well.  That is, there exists a \emph{canonical} 1-isomorphism $(u,[v]):0 \to \pi^*\zeta$, where 
\begin{equation} \label{e:can1iso}
-\pi^*\zeta_0=du, \quad -\pi^*\zeta_1-\partial_{\mathbf{R}} u = dv, \quad \text{and} \, \, -\pi^*\zeta_2-\partial_{\mathbf{R}} v \,\, \text{ is exact.}
\end{equation}
To be more precise, the trivialization $\sigma=\sigma_{\mathbf{R}}=(u,[v])$ above satisfies the following property.
Given a groupoid morphism $f:\mathbf{H}\to \GG$, let $f^*R \stackrel{p}{\longrightarrow} H_1 \toto H_0$ be the pullback $S^1$-central extension,
\[
\xymatrix{
f^*R \ar[r]^{\hat{f}} \ar[d]_p & R \ar[d]^\pi \\
H_1 \ar[r]^f \ar@<-.7ex>[d] \ar@<+.7ex>[d] & G_1 \ar@<-.7ex>[d] \ar@<+.7ex>[d]  \\
H_0 \ar[r]^{f_0} & G_0 \\
}
\]
 and consider the pullback central extension $\hat{f}^*(\pi^*R)\cong p^*(f^*R) \to f^*R \toto H_0$,   
\[
\xymatrix{
\hat{f}^*(\pi^*R) \ar[r]  \ar[d] & \pi^*R \ar[d]^\pi \\
f^*R \ar[r]^{\hat{f}} \ar@<-.7ex>[d] \ar@<+.7ex>[d] & R \ar@<-.7ex>[d] \ar@<+.7ex>[d]  \\
H_0 \ar[r]^{f_0} & G_0. \\
}
\]
Viewing trivializations of $S^1$-central extensions as sections, let $\sigma:{R} \to  \pi^*R$ be a section of $\pi$.  Then the  sections 
$$
\sigma_{f^*R},\, \hat{f}^*\sigma_R:f^*R \to \hat{f}^*(\pi^*R)
$$ 
agree (up to natural 2-isomorphism). 

Taking $f=\epsilon_\GG$, it follows that $\epsilon_\GG^*(u,[v])=(u,[\epsilon_\GG^*v])$ and $((b_0, g_0, \beta_0),[b_1, g_1, 0])$ are 2-isomorphic.  In particular, 
\(
u=(b_0, g_0, \beta_0)-dw, 
\)
for some $w$ in $\opDC_1^1(G_0)$.  Together with \eqref{e:can1iso}, a straightforward calculation verifies that   $r=\partial_R w-v$ will suffice. Letting $\theta$ be the component of $r\in \opDC^1_1(R)$ in the $\Omega^1(R)$ summand, we find that the equivariant curvature class is as stated in the Theorem.
\end{proof}

As a special case of Theorem \ref{t:compare} above, we briefly consider the case of ordinary Hamiltonian $G$-spaces.  (See also \cite[Example 4.3]{laurent2005quantization}.)

\begin{example} \label{eg:ordinarypreq} Let $(M,\omega)$ be a symplectic manifold equipped with a Hamiltonian $G$-action with $G$-equivariant moment map $\Phi:M\to \g^*$, where $G$ is a compact Lie group.  In other words, $(M,\omega,\Phi)$ is a Hamiltonian $T^*G$-space, for the symplectic groupoid $(T^*G\toto \g^*,-d\alpha)$, where $\alpha$ is the canonical 1-form on the cotangent bundle (which satisfies $\partial \alpha=0$).  Additionally, suppose $(M,\omega,\Phi)$ admits a prequantization.  Since $[-d\alpha]=[-\delta \alpha]=0$ in $H^3(T^*G \toto \g^*;\RR)$, we may choose the trivial $S^1$-central extension $\mathbf{R}=T^*G \times S^1 \to T^*G \toto \g^*$ in Theorem \ref{t:compare}.  (In the notation of the statement of the theorem, we have $\beta_0=0$ and $\theta=\pi^*\alpha$.)
Moreover, the $\mathbf{R}$-equivariant $S^1$-bundle $p:L\to M$ has (real) curvature class $[-\omega ] \in H^2(M,;\RR)$---i.e.~$L$ is a prequantum circle bundle---and its corresponding $\mathbf{R}$-equivariant extension is $[-\omega\oplus \Phi^*\pi^*\alpha] \in H^2(\mathbf{R}\ltimes M;\RR)$.  

Since $\mathbf{R}$ is a trivial $S^1$-central extension, we may view $L$ as a $G$-equivariant $S^1$-bundle.  Hence, the $G$-equivariant curvature is $[-\omega\oplus \Phi^*\alpha]$ in $H^2(G\ltimes M;\RR)$, which
corresponds to $[-\omega+\Phi] \in H^2_G(M;\RR)$ in the Cartan model for $G$-equivariant de~Rham cohomology.
\eoe
 \end{example}

\section{Morita invariance}\label{s:morita}

This section considers the compatibility of  Definition \ref{d:preq} with Morita equivalence, and establishes a Morita invariance property for prequantization in Theorem \ref{t:moritapreq}.

\subsection{Morita equivalence of quasi-presymplectic groupoids}

 Recall that a morphism of Lie groupoids $\mathbf{F}:\GG \to \mathbf{H}$ is a \emph{(weak) equivalence} if the map 
 \[
 t\circ \pr_1:H_1 \times_{H_0} G_0 \to H_0
 \]
 (defined on pairs $(h,x)$ satisfying $s(h)=F_0(x)$)  is a surjective submersion and if the commutative square below is cartesian:
\[
\xymatrix{
G_1 \ar[r]^-{(s,t)} \ar[d]_{F_1} & G_0 \times G_0 \ar[d]^{F_0 \times F_0} \\
H_1 \ar[r]^-{(s,t)} & H_0 \times H_0
}
\]
Also, recall that Lie groupoids $\GG$ and $\mathbf{H}$ are \emph{Morita equivalent} if there exists a Lie groupoid $\mathbf{K}$ and a pair of equivalences $\xymatrix{\GG & \ar[l]_{\lambda} \mathbf{K} \ar[r]^{\rho} & \mathbf{H}}$.  In this case, we shall refer to the pair $(\lambda,\rho)$ as a \emph{Morita equivalence}.

\begin{definition} \label{d:moritaeq}
Let $(\GG,\omega\oplus \eta)$ and $(\mathbf{H},\nu\oplus\chi)$ be quasi-presymplectic groupoids. A Morita equivalence $\xymatrix{\GG & \ar[l]_{\lambda} \mathbf{K} \ar[r]^{\rho} & \mathbf{H}}$ is an \emph{equivalence of quasi-presymplectic groupoids} if the equality $[\lambda^*(\omega\oplus \eta)] = [\rho^*(\nu \oplus \chi)]$ holds in $H^3(\mathbf{K};\RR)$.
\end{definition}

\begin{remark} \label{r:firstkind}
If $\xymatrix{\GG & \ar[l]_{\lambda} \mathbf{K} \ar[r]^{\rho} & \mathbf{H}}$ is an equivalence  of proper quasi-presymplectic groupoids $(\GG,\omega\oplus \eta)$ and $(\mathbf{H},\nu\oplus\chi)$, then one may choose a primitive $\alpha \in \Omega^2(K_0)$ with 
\begin{equation} \label{eq:3forms}
\rho^*(\nu\oplus \chi) - \lambda^*(\omega \oplus \eta) = \delta \alpha.
\end{equation}
That is, $\mathbf{K}$ may be viewed as a quasi-presymplectic groupoid in two ways, $(\mathbf{K},\lambda^*(\omega\oplus \eta))$ and $(\mathbf{K},\rho^*(\nu \oplus \chi))$, which differ by a gauge transformation \cite{xu2004momentum}.
\end{remark}

Recall that two Morita equivalences $\xymatrix{\GG & \ar[l]_{\lambda} \mathbf{K} \ar[r]^{\rho} & \mathbf{H}}$ and $\xymatrix{\GG & \ar[l]_{\lambda'} \mathbf{K'} \ar[r]^{\rho'} & \mathbf{H}}$ are \emph{2-isomorphic} if there exists a Morita equivalence $\xymatrix{\mathbf{K} & \ar[l] \mathbf{L} \ar[r] & \mathbf{K}'}$ such that the following diagram commutes:
\[
\xymatrix{
	&	\mathbf{K} \ar[dl]_{\lambda} \ar[dr]^{\rho}	&	\\
\GG	&	\mathbf{L} \ar[u] \ar[d] 	& \mathbf{H} \\
	&	\mathbf{K'} \ar[ul]^{\lambda'} \ar[ur]_{\rho'}	&	
}
\]
Hence, if $\xymatrix{\GG & \ar[l]_{\lambda} \mathbf{K} \ar[r]^{\rho} & \mathbf{H}}$ is a Morita equivalence of quasi-presymplectic groupoids $(\GG,\omega\oplus \eta)$ and $(\mathbf{H},\nu\oplus\chi)$, then so is $\xymatrix{\GG & \ar[l]_{\lambda'} \mathbf{K'} \ar[r]^{\rho'} & \mathbf{H}}$.  In particular, if $(\GG,\omega\oplus \eta)$ and $(\mathbf{H},\nu\oplus\chi)$ are Morita equivalent quasi-presymplectic groupoids, there exists a Morita equivalence (of quasi-presymplectic groupoids) $\xymatrix{\GG & \ar[l]_{\lambda} \mathbf{K} \ar[r]^{\rho} & \mathbf{H}}$, where 
the object maps $\lambda_0:K_0 \to G_0$ and $\rho_0:K_0 \to H_0$ are surjective submersions (e.g.~see \cite[Remark 1.38]{li2015higher}).

\subsection{Related Hamiltonian spaces}

Recall from \cite[Theorem 4.19]{xu2004momentum}, that Morita equivalent quasi-presymplectic groupoids $(\GG,\omega\oplus \eta)$ and $(\mathbf{H},\nu\oplus\chi)$ give rise to equivalent theories of Hamiltonian actions.  That is, given a Hamiltonian $\GG$-space $(X,\omega_X,\Phi)$ one may construct a corresponding Hamiltonian $\mathbf{H}$-space $(M,\omega_M,\mu)$ and vice versa.  In Theorem \ref{t:moritapreq} below, we will show that prequantization respects this correspondence.  

To begin, we recall some aspects of the correspondence in \cite{xu2004momentum}, in a special case which will suffice in our setting.  (We refer the reader to \cite[Proposition 4.23]{xu2004momentum} for details.)
Let $(\GG,\omega\oplus \eta)$ and $(\mathbf{H},\nu\oplus\chi)$ be proper quasi-presymplectic groupoids, and let $\mathbf{F}:\GG \to \mathbf{H}$ be an equivalence, viewed as an equivalence of quasi-presymplectic groupoids (i.e.~with $\lambda = \id$, $\rho=\mathbf{F}$). Suppose in addition that $F_0:G_0 \to H_0$ is a surjective submersion.  Let $\alpha \in \Omega^2(G_0)$ satisfy 
\begin{equation*}
\mathbf{F}^*(\nu \oplus \chi) -\omega\oplus \eta = \delta \alpha
\end{equation*}
as in Remark \ref{r:firstkind}.

Let $(M,\omega_{M},\mu)$ be a pre-Hamiltonian $\mathbf{H}$-space.  Then the corresponding $\GG$-space $(X,\omega_X,\Phi)$ is obtained by setting 
\[
X = G_0 \times_{H_0} M,
\]
with natural $\GG$-action, 
$$
g\cdot (x,m) = (t(g),F_1(g)\cdot m)
$$ for $g\in G_1$ satisfying $s(g)=x$,
and moment map $\Phi:X \to G_0$ given by projection to the first factor.

Conversely, $(M,\omega_{M},\mu)$ is obtained from $(X,\omega_X,\Phi)$ by setting $M$ to be the quotient 
\[
M = X/G_\star
\]
where  $G_\star \toto G_0$ is the subgroupoid  $G_\star \subset G_1$ consisting of all arrows $g \in G_1$ such that $F_1(g)\in H_1$ is an identity map (i.e.~in the image of the unit $\epsilon_{\mathbf{H}}$).  The moment map $\mu:M \to H_0$ is induced from the moment map $\Phi$, $\mu([p]) = F_0(\Phi(p))$.  The $\mathbf{H}$-action can be described as follows. For a $G_\star$-orbit $[p]$ and $h\in H_1$ with $s(h)=\mu([p])$, choose $y\in G_0$ with $F_0(y)=t(h)$. Then there exists a unique $g\in G_1$ with $F_1(g)=h$, and we set $h\cdot [p]=[g\cdot p]$.

The corresponding 2-forms on $X$ and $M$ are determined by the relation:
\begin{equation} \label{eq:2forms}
f_0^*\omega_{M} = \omega_X -  \Phi^*\alpha,
\end{equation}
where $f_0:X\to M$ denotes the quotient map.

\begin{lemma} \label{l:morita}
Let $\GG$ and $\mathbf{H}$ be proper Lie groupoids, and let $\mathbf{F}:\GG \to \mathbf{H}$ be an equivalence with $F_0$ a surjective submersion.  Let $X$ and $M$ be  $\GG$ and $\mathbf{H}$-spaces with moment maps $\Phi$ and $\mu$, respectively, obtained from either of the above constructions.  Then the natural morphism $\GG\ltimes X \to \mathbf{H} \ltimes M$ is an equivalence. 
\end{lemma}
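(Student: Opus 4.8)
The plan is to write down the natural morphism $\Psi\colon \GG\ltimes X \to \mathbf{H}\ltimes M$ explicitly and to verify directly the two defining conditions of a weak equivalence. Since the two constructions are mutually inverse, I may take $X = G_0\times_{H_0} M$, so that the object map $\Psi_0\colon X \to M$ is projection to the second factor (coinciding with the quotient map $f_0$), and the arrow map is $\Psi_1(g,(x,m)) = (F_1(g),m)$. The source--target compatibilities are immediate: $s(F_1(g)) = F_0(s(g)) = F_0(x) = \mu(m)$, while the $\GG$-action formula $g\cdot(x,m) = (t(g),F_1(g)\cdot m)$ gives $\Psi_0(g\cdot(x,m)) = F_1(g)\cdot m = t(F_1(g),m)$; hence $\Psi$ is a genuine morphism of Lie groupoids.

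For essential surjectivity I would show that
\[
t\circ\pr_1\colon (\mathbf{H}\ltimes M)_1 \times_M X \longrightarrow M, \qquad \big((h,m),(x,m)\big)\mapsto h\cdot m,
\]
(the fibre product taken over $s$ and $\Psi_0$) is a surjective submersion. The key observation is that this domain is canonically $(H_1\times_{H_0}M)\times_{H_0}G_0$, with map to $H_0$ given by $(h,m)\mapsto s(h) = \mu(m)$ and $G_0\to H_0$ given by $F_0$. Since $F_0$ is a surjective submersion by hypothesis, base change shows the projection onto $H_1\times_{H_0}M$ is a surjective submersion; composing with the target map $H_1\times_{H_0}M\to M$ of $\mathbf{H}\ltimes M$ (a surjective submersion, with section supplied by $\epsilon_{\mathbf{H}}$) exhibits $t\circ\pr_1$ as a composite of surjective submersions, as needed.

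For the fully faithful condition I would show that the square
\[
\xymatrix{
(\GG\ltimes X)_1 \ar[r]^-{(s,t)} \ar[d]_{\Psi_1} & X\times X \ar[d]^{\Psi_0\times\Psi_0} \\
(\mathbf{H}\ltimes M)_1 \ar[r]^-{(s,t)} & M\times M
}
\]
is cartesian. Unwinding the fibre product $(\mathbf{H}\ltimes M)_1\times_{M\times M}(X\times X)$, an element is precisely a tuple $(h,x_1,x_2,m)$ with $s(h) = \mu(m) = F_0(x_1)$ and $t(h) = F_0(x_2)$, while $\Psi_1\circ(s,t)$ sends an arrow $(g,(x,m))$ to $\big(F_1(g),x,t(g),m\big)$. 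Producing the unique $g\in G_1$ with $F_1(g)=h$, $s(g)=x_1$, $t(g)=x_2$ from such a tuple is exactly the content of the cartesian square defining the equivalence $\mathbf{F}$, i.e.\ of the identification $G_1\cong H_1\times_{H_0\times H_0}(G_0\times G_0)$. Thus the fully faithful condition for $\Psi$ reduces, after bookkeeping of fibre products, to that for $\mathbf{F}$.

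The main obstacle is organizational rather than conceptual: one must keep the nested fibre products straight and confirm that the reduction above is a diffeomorphism, not merely a bijection. This is automatic because all the structure maps in sight are submersions---so every fibre product appearing is a smooth manifold---and the identification of $(\GG\ltimes X)_1$ with $(\mathbf{H}\ltimes M)_1\times_{M\times M}(X\times X)$ is obtained by base change from the diffeomorphism $G_1\cong H_1\times_{H_0\times H_0}(G_0\times G_0)$ furnished by $\mathbf{F}$. (Note that properness of $\GG$ and $\mathbf{H}$ plays no role in this particular lemma.)
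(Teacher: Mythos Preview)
Your proposal is correct and follows essentially the same route as the paper: reduce to $X=G_0\times_{H_0}M$, observe that the object map is a surjective submersion (as a base change of $F_0$) to handle essential surjectivity, and unwind the fibre product on arrows to see that full faithfulness for $\Psi$ is exactly the cartesian square defining the equivalence $\mathbf{F}$. You supply somewhat more detail than the paper---explicitly factoring $t\circ\pr_1$ through the projection to $(\mathbf{H}\ltimes M)_1$, and noting that the bijection on arrows is a diffeomorphism because it arises by base change of $G_1\cong H_1\times_{H_0\times H_0}(G_0\times G_0)$ along submersions---and your remark that properness is not used is accurate.
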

\begin{proof}
It is straightforward to verify that the above constructions for $X$ and $M$ are inverse to each other (up to diffeomorphism).  
Therefore, we may assume $X= G_0 \times_{H_0} M$.
Abusing notation, let $\mathbf{f}:\GG\ltimes X \to \mathbf{H}\ltimes M$ be the induced map of action groupoids:
\[
\xymatrix{
G_1\times_{G_0}  G_0 \times_{H_0} M \ar@<-.7ex>[d] \ar@<+.7ex>[d] \ar[r]^-{f_1}  & H_1\times_{H_0}M \ar@<-.7ex>[d] \ar@<+.7ex>[d]\\
 G_0 \times_{H_0} M \ar[r]^-{f_0} & M
}
\]
Since $F_0$ is a surjective submersion, so is the pullback $f_0$, and hence so is the map
\[
t\circ \pr_1: (H_1\times_{H_0}M) \times_{M} (G_0 \times_{H_0} M) \to M.
\]
It remains to verify 
\begin{equation} \label{eq:ff}
G_1\times_{G_0}  G_0 \times_{H_0} M  \cong (( G_0 \times_{H_0} M) \times ( G_0 \times_{H_0} M )) \times_{M\times M}  (H_1\times_{H_0}M)
\end{equation}
To that end, let $(x_1,m_1; x_2, m_2; h,m)$ be an element from the right hand side of \eqref{eq:ff}, so that $\phi(x_i)=\mu(m_i)$ ($i=1,2$), $m_1=m$, and $m_2=h\cdot m$.  Since $\mathbf{F}$ is an equivalence, there is a unique $g\in G_1$ with $s(g)=x_1$, $t(g)=x_2$ and $(g,x_1,m_1)$ defines an arrow on the left side of \eqref{eq:ff}. This gives the required identification.
\end{proof}

Now suppose that $\xymatrix{\GG & \ar[l]_{\lambda} \mathbf{K} \ar[r]^{\rho} & \mathbf{H}}$ is a Morita equivalence of the proper quasi-symplectic groupoids $(\GG,\omega\oplus \eta)$ and $(\mathbf{H},\nu\oplus\chi)$, where $\lambda_0$ and $\rho_0$ are surjective submersions.  Let $\alpha\in \Omega^2(K_0)$ satisfy \eqref{eq:3forms}.

We may iterate the construction above to associate a Hamiltonian $\mathbf{H}$-space to a Hamiltonian $\GG$-space \emph{via} the corresponding $\mathbf{K}$-space.  Indeed, let $(X,\omega_X,\Phi)$ be a pre-Hamiltonian $\GG$-space.  Using the above construction, we may form a Hamiltonian $\mathbf{K}$-space $(Z,\omega_Z,\Psi)$ for the quasi-presymplectic groupoid $(\mathbf{K},\rho^*(\nu\oplus \chi))$.  It is straightforward to verify that $(Z,\omega_Z+\Psi^*\alpha,\Psi)$ is a Hamiltonian $\mathbf{K}$-space for the quasi-presymplectic groupoid $(\mathbf{K},\lambda^*(\omega\oplus \eta))$.  Hence we may form the Hamiltonian $\mathbf{H}$-space $(M,\omega_M,\mu)$.  Using \eqref{eq:2forms}, it follows that the 2-forms $\omega_X$, $\omega_M$, and $\omega_Z$ satisfy:
\[
\ell^*\omega_X = \omega_Z + \Psi^*\alpha \quad \text{and} \quad r^*\omega_M=\omega_Z,
\]
where $\ell:Z\to X$ and $r:Z\to M$ denote the natural quotient maps arising in the construction.

A straightforward application of  Lemma \ref{l:morita} gives the following Proposition.

\begin{proposition} \label{p:morita}
Let $\GG$ and $\mathbf{H}$ be   proper Lie groupoids and suppose $\xymatrix{\GG & \ar[l]_{\lambda} \mathbf{K} \ar[r]^{\rho} & \mathbf{H}}$ is a Morita equivalence. Let $X$, $Z$ and $M$ be  $\GG$, $\mathbf{K}$ and $\mathbf{H}$-spaces with moment maps $\Phi$, $\Psi$, and $\mu$, respectively,  as in the above construction.  Then  the natural morphisms $\xymatrix{\GG \ltimes X& \ar[l] \mathbf{K}\ltimes Z \ar[r] & \mathbf{H}\ltimes M}$ provide a Morita  equivalence of the corresponding action groupoids. 
\end{proposition}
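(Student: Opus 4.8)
The plan is to reduce the Proposition to two applications of Lemma \ref{l:morita}, one for each leg of the Morita equivalence. First I would record two preliminary facts. Since $\GG$ and $\mathbf{H}$ are proper and $\mathbf{K}$ is Morita equivalent to them, $\mathbf{K}$ is itself proper, as properness is a Morita invariant; this ensures the hypotheses of Lemma \ref{l:morita} are met for both legs. Moreover, by the discussion following Definition \ref{d:moritaeq}, we may assume the object maps $\lambda_0$ and $\rho_0$ are surjective submersions.

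Next I would apply Lemma \ref{l:morita} twice. Regarding $\lambda:\mathbf{K} \to \GG$ as the equivalence $\mathbf{F}$ of the Lemma (with $\mathbf{K}$ in the role of the source $\GG$ and $\GG$ in the role of the target $\mathbf{H}$), and taking the related spaces $Z$ and $X$, the Lemma shows that the natural morphism $\mathbf{K} \ltimes Z \to \GG \ltimes X$ induced by $\lambda$ and the quotient map $\ell$ is an equivalence. Likewise, regarding $\rho:\mathbf{K} \to \mathbf{H}$ as $\mathbf{F}$ and taking the related spaces $Z$ and $M$, the Lemma shows that $\mathbf{K} \ltimes Z \to \mathbf{H} \ltimes M$ induced by $\rho$ and $r$ is an equivalence. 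Since both legs point out of the common apex $\mathbf{K} \ltimes Z$, the span $\xymatrix{\GG \ltimes X & \ar[l] \mathbf{K}\ltimes Z \ar[r] & \mathbf{H}\ltimes M}$ is then a Morita equivalence by definition.

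The only genuine point to check---and the place I expect to spend a sentence of care---is that a \emph{single} $\mathbf{K}$-space $Z$ serves as apex for both applications, so that the two equivalences really share an apex. This holds because the two quasi-presymplectic structures $\lambda^*(\omega\oplus\eta)$ and $\rho^*(\nu\oplus\chi)$ on $\mathbf{K}$ differ only by the gauge term $\delta\alpha$ (Remark \ref{r:firstkind}), and the associated Hamiltonian $\mathbf{K}$-spaces $(Z,\omega_Z+\Psi^*\alpha,\Psi)$ and $(Z,\omega_Z,\Psi)$ have the \emph{same} underlying action groupoid $\mathbf{K}\ltimes Z$; the 2-forms play no role in the groupoid structure used by Lemma \ref{l:morita}. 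Thus all the substantive work is already contained in Lemma \ref{l:morita}, and what remains is the organizational matching of the two legs to this shared apex.
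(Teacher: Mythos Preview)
Your proposal is correct and matches the paper's approach exactly: the paper states that the Proposition follows by ``a straightforward application of Lemma~\ref{l:morita},'' and your argument applies that Lemma once to each leg $\lambda$ and $\rho$, with the same common apex $\mathbf{K}\ltimes Z$. Your additional remarks on the properness of $\mathbf{K}$ and on the gauge-equivalent $2$-forms not affecting the underlying action groupoid are apt clarifications, but no further ideas are needed.
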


We are now ready to establish the main result of this section, showing that prequantization respects related Hamiltonian spaces.

\begin{theorem} \label{t:moritapreq}
Let $(\GG,\omega\oplus \eta)$ and $(\mathbf{H},\nu\oplus\chi)$ be proper quasi-presymplectic groupoids, and let $\xymatrix{\GG & \ar[l]_{\lambda} \mathbf{K} \ar[r]^{\rho} & \mathbf{H}}$ be an equivalence of quasi-presymplectic groupoids where $\lambda_0:K_0 \to G_0$ and $\rho_0:K_0 \to H_0$ are surjective submersions.  Let $(X,\omega_{X},\Phi)$ and $(M,\omega_{M},\mu)$ be Hamiltonian $\GG$ and $\mathbf{H}$-spaces, respectively, as in the discussion above.
 Then  $(X,\omega_{X},\Phi)$ admits a prequantization if and only if  $(M,\omega_{M},\mu)$ does.
\end{theorem}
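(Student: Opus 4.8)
The plan is to reduce the statement to the integrality criterion of Proposition \ref{p:integrality} and then to transport integrality of the relevant relative total form across the Morita equivalence. By Proposition \ref{p:integrality}, $(X,\omega_X,\Phi)$ admits a prequantization if and only if $(\omega_X,\omega\oplus\eta)$ is integral in $H^3(\PPhi;\RR)$, and likewise $(M,\omega_M,\mu)$ admits one if and only if $(\omega_M,\nu\oplus\chi)$ is integral in $H^3(\mmu;\RR)$, where $\mmu\colon \mathbf{H}\ltimes M \to \mathbf{H}$ is the morphism induced by the $\mathbf{H}$-action. It therefore suffices to show these two integrality conditions are equivalent.

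First I would introduce the $\mathbf{K}$-space $(Z,\omega_Z,\Psi)$ and the induced morphism $\boldsymbol{\Psi}\colon \mathbf{K}\ltimes Z \to \mathbf{K}$ from the construction preceding Proposition \ref{p:morita}, together with the two commuting squares relating $\boldsymbol{\Psi}$ to $\PPhi$ (via $\ell$ and $\lambda$) and to $\mmu$ (via $r$ and $\rho$). Each square induces a morphism of algebraic mapping cones, and hence a map of the associated long exact sequences in relative cohomology. Since $\lambda,\rho,\ell,r$ are all weak equivalences (Lemma \ref{l:morita}, Proposition \ref{p:morita}), they induce isomorphisms on groupoid cohomology with both $\ZZ$ and $\RR$ coefficients; the five lemma then yields isomorphisms $H^3(\PPhi;-)\cong H^3(\boldsymbol{\Psi};-)\cong H^3(\mmu;-)$, natural in the coefficients and hence compatible with the coefficient homomorphism $\ZZ\hookrightarrow\RR$. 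Consequently, a class is integral on one side precisely when its image is integral on the other.

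The core step is to check that the relative total forms for $X$ and $M$ have the \emph{same} image in $H^3(\boldsymbol{\Psi};\RR)$. Pulling back along $(\ell,\lambda)$ and $(r,\rho)$ and using the construction's relations $\ell^*\omega_X=\omega_Z+\Psi^*\alpha$ and $r^*\omega_M=\omega_Z$ (where $\alpha\in\Omega^2(K_0)$ satisfies $\rho^*(\nu\oplus\chi)-\lambda^*(\omega\oplus\eta)=\delta\alpha$ as in Remark \ref{r:firstkind}), the two pullbacks are
\[
(\omega_Z+\Psi^*\alpha,\;\lambda^*(\omega\oplus\eta)) \quad\text{and}\quad (\omega_Z,\;\rho^*(\nu\oplus\chi)).
\]
Their difference is $(\Psi^*\alpha,\,-\delta\alpha)$, which is exactly the relative coboundary $d(0,\alpha)$ in the mapping cone of $\boldsymbol{\Psi}^*$: recall from the proof of Proposition \ref{p:integrality} that the cone coboundary of a degree-$2$ pair $(\mathbf{g},\mathbf{h})$ is $(\delta\mathbf{g}+\boldsymbol{\Psi}^*\mathbf{h},-\delta\mathbf{h})$, and here $\boldsymbol{\Psi}^*\alpha=\Psi^*\alpha$ since $\alpha$ sits in bidegree $(2,0)$ and $\boldsymbol{\Psi}$ restricts to $\Psi$ on objects. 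Hence both relative total forms represent one and the same class in $H^3(\boldsymbol{\Psi};\RR)$.

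Combining the two previous paragraphs, the relative classes $[(\omega_X,\omega\oplus\eta)]$ and $[(\omega_M,\nu\oplus\chi)]$ correspond under the cohomology isomorphisms, which intertwine the integral and real structures; thus one is integral if and only if the other is, and by Proposition \ref{p:integrality} one space admits a prequantization if and only if the other does. The main obstacle is the bookkeeping in the third paragraph: verifying that the $\delta\alpha$-discrepancy between the two quasi-presymplectic structures on $\mathbf{K}$ is accounted for \emph{precisely} by a relative coboundary, and that the equivalences induce cohomology isomorphisms compatible with the $\ZZ\to\RR$ coefficient change (so that integrality, not merely the real class, is transported). The remaining verifications---that the two relative forms are genuine relative cocycles and that the squares commute---are routine given the construction and Proposition \ref{p:morita}.
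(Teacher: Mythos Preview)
Your proposal is correct and follows essentially the same route as the paper: reduce to the integrality criterion of Proposition \ref{p:integrality}, pass to the intermediate $\mathbf{K}$-space $(Z,\omega_Z,\Psi)$, use the five lemma on the induced maps of mapping cones to identify $H^3(\PPhi;-)\cong H^3(\boldsymbol{\Psi};-)\cong H^3(\mmu;-)$, and then verify that the two pulled-back relative forms differ by the coboundary $\delta(0,\alpha)$. Your explicit remark that the five-lemma isomorphisms are natural in the coefficients (so that integrality, not just the real class, is transported) makes a point the paper leaves implicit, but otherwise the arguments coincide.
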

\begin{proof}
Let $Z$ be the intermediate Hamiltonian $\mathbf{K}$-space with moment map $\Psi$, as in the discussion preceding Proposition \ref{p:morita}.
By Proposition \ref{p:morita}, the following diagram of Lie groupoids commutes,
\[
\xymatrix{
\mathbf{H} \ltimes \ar[r]^-{\boldsymbol{\mu}}  Z & \mathbf{K}  \\
\mathbf{K} \ltimes \ar[r]^-{\boldsymbol{\Psi}} \ar[u]^-{r} \ar[d]_{\ell} Z & \mathbf{K} \ar[u]_-{\rho} \ar[d]^{\lambda} \\
\mathbf{G} \ltimes X \ar[r]^-{\PPhi} & \mathbf{G}
}
\]
where the vertical maps are equivalences.

Being equivalences, the vertical maps induce isomorphisms on cohomology $H^*(-)$ (with any coefficients).  The two commutative squares in the above diagram induces  maps on relative cohomology (with any coefficients)  $H^3(\boldsymbol{\mu}) \rightarrow H^3(\boldsymbol{\Psi}) \leftarrow H^3(\PPhi)$, which, by easy applications of the 5-Lemma, are isomorphisms.  
By Proposition \ref{p:integrality}, it suffices to check that $[(\omega_X,\omega\oplus \eta)] \in H^3(\PPhi;\RR)$ is integral if and only if $[(\omega_M,\nu\oplus \chi)] \in H^3(\boldsymbol{\mu};\RR)$ is integral.  Using the above pair of isomorphisms induced from the commutative diagram, it then suffices to check that
$(\ell,\lambda)^*[(\omega_X,\omega\oplus \eta)]=(r,\rho)^*[(\omega_M,\nu\oplus \chi)]$. Indeed, we have
\begin{align*}
(\ell,\lambda)^*[(\omega_X,\omega\oplus \eta)] & = (r^*\omega_M +\Psi^*\alpha,\rho^*(\nu\oplus \chi)-\delta \alpha) \\
& = (r^*\omega_M,\rho^*(\nu\oplus \chi)) + \delta(0,\alpha). 
\end{align*}
\end{proof}

\subsubsection*{Hamiltonian loop group actions and quasi-Hamiltonian spaces}

For the remainder of this section, let $G$ be a compact Lie group with bi-invariant inner product $\langle \cdot, \cdot \rangle$ on $\g$, and $LG=\operatorname{Map}(S^1,G)$ as in Example \ref{eg:loopgroup}.

As an application of Theorem \ref{t:moritapreq} to the equivalence (see \cite[Theorem 8.3]{alekseev1998lie} and \cite[Corollary 4.28]{xu2004momentum}) between Hamiltonian $LG$-actions with proper moment map and quasi-Hamiltonian $G$-actions with group-valued moment map, we obtain the following corollary (\emph{cf.} \cite[Theorem A.7]{krepski2008pre} which assumes $G$ is simply connected).

\begin{corollary} \label{c:ammloop}
Let $(X,\omega_X,\Phi)$ be a pre-Hamiltonian $LG$-space, and $(M,\omega_M,\mu)$ its corresponding quasi-Hamiltonian $G$-space.  Then $(X,\omega_X,\Phi)$ admits a prequantization if and only if $(M,\omega_M,\mu)$ does.
\end{corollary}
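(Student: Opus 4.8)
The plan is to exhibit the Alekseev--Malkin--Meinrenken correspondence between Hamiltonian $LG$-spaces and quasi-Hamiltonian $G$-spaces as precisely the correspondence of related Hamiltonian spaces for a single equivalence of quasi-presymplectic groupoids, and then to quote Theorem \ref{t:moritapreq}. First I would set $\GG = LG \ltimes L\g^*$ with the quasi-presymplectic structure of Example \ref{eg:loopgroup} and $\mathbf{H} = G \ltimes G$ the AMM groupoid of Example \ref{eg:amm}, and take $\mathbf{F} : \GG \to \mathbf{H}$ to be the morphism whose object map is the holonomy $\Hol : L\g^* = \Omega^1(S^1;\g) \to G$ and whose arrow map sends a gauge transformation $(\gamma, A)$ to $(\gamma(\mathrm{pt}), \Hol(A))$. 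Since $\Hol$ is a surjective submersion realizing $L\g^*$ as a principal $\Omega G$-bundle over $G$, this $\mathbf{F}$ (with $\lambda = \id$, $\rho = \mathbf{F}$) is the weak equivalence underlying \cite[Theorem 8.3]{alekseev1998lie} and \cite[Corollary 4.28]{xu2004momentum}. Under $\mathbf{F}$ the subgroupoid of arrows mapping to units is the based loop group $\Omega G$, so the two constructions recalled before Lemma \ref{l:morita} read $X = L\g^* \times_G M$ and $M = X/\Omega G$---exactly the correspondence asserted in the statement.

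Next I would verify that $\mathbf{F}$ is an equivalence of quasi-presymplectic groupoids in the sense of Definition \ref{d:moritaeq}, i.e.\ that $\mathbf{F}^*$ of the AMM structure differs from the loop-group structure by a coboundary $\delta \alpha$ for a suitable $\alpha \in \Omega^2(L\g^*)$, and that the related $2$-forms satisfy $f_0^*\omega_M = \omega_X - \Phi^*\alpha$ as in \eqref{eq:2forms}, where $f_0 : X \to M$ is the quotient by $\Omega G$. Both statements are the content of the AMM reduction computation (with $\alpha$ the relevant transgression $2$-form), and I would record them with a citation rather than reprove them. Granting this, $(X,\omega_X,\Phi)$ and $(M,\omega_M,\mu)$ are related Hamiltonian spaces for $\mathbf{F}$, so Theorem \ref{t:moritapreq} yields the equivalence of prequantizability directly; the class-matching identity $(\ell,\lambda)^*[(\omega_X,\omega\oplus\eta)] = (r,\rho)^*[(\omega_M,\nu\oplus\chi)]$ needed in that proof is here just a restatement of \eqref{eq:2forms} and so requires no new calculation.

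The hard part will be that $\GG = LG \ltimes L\g^*$ is infinite-dimensional, so neither the properness hypothesis of Theorem \ref{t:moritapreq} nor the underlying classification of equivariant DD-bundles feeding Proposition \ref{p:integrality} applies verbatim on the loop-group side. I would resolve this by leaning on the standing hypothesis that $\Phi$ is proper together with the observation that $\Omega G$ acts freely and properly on $L\g^*$ (and hence on $X = L\g^* \times_G M$), so that $M = X/\Omega G$ is a genuine finite-dimensional quasi-Hamiltonian $G$-space and $G \ltimes M$, $G \ltimes G$ are proper Lie groupoids. The proof of Theorem \ref{t:moritapreq} uses properness only to invoke the integrality criterion of Proposition \ref{p:integrality}, and in the present setting I would apply that criterion solely on the finite-dimensional proper side $\boldsymbol{\mu} : G \ltimes M \to G \ltimes G$; the comparison isomorphisms on relative cohomology survive because the vertical maps in the diagram of Proposition \ref{p:morita} are weak equivalences and therefore induce isomorphisms on simplicial cohomology irrespective of dimension. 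The only point requiring genuine care is to confirm that a relative DD-bundle over the infinite-dimensional groupoid $LG \ltimes X$ transports across the weak equivalence to $G \ltimes M$ with its real DD-class preserved; this amounts to Morita invariance of the bicategory of (relative) equivariant DD-bundles, which does not need properness and can be taken from \cite{KrepskiWatts}.
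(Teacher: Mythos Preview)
Your proposal matches the paper's approach exactly: the corollary is presented without proof as a direct application of Theorem~\ref{t:moritapreq} to the equivalence of quasi-presymplectic groupoids between $LG\ltimes L\g^*$ and the AMM groupoid, with citations to \cite[Theorem~8.3]{alekseev1998lie} and \cite[Corollary~4.28]{xu2004momentum}. Your third paragraph flags a genuine technical point the paper passes over in silence---the loop-group side is infinite-dimensional and the properness hypothesis of Theorem~\ref{t:moritapreq} is not verified---but your proposed workaround (carry the integrality condition across the relative-cohomology isomorphism and invoke Proposition~\ref{p:integrality} on the finite-dimensional side) is essentially how the proof of Theorem~\ref{t:moritapreq} already runs, and inspection of that proof together with Lemma~\ref{l:morita} and Proposition~\ref{p:morita} shows properness is not used beyond ensuring the quotient $M=X/G_\star$ is smooth, which in this case follows from the free proper $\Omega G$-action you identify.
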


A necessary condition (and ingredient) for the existence of a prequantization is the existence of $\GG$-equivariant DD-bundle representing the quasi-presymplectic structure $\omega\oplus \eta$.  For the case of the symplectic groupoid $LG\times L\g^* \toto L\g^*$ this amounts to a central $S^1$-extension (by Theorem \ref{t:compare} (1)).  And by \cite[Theorem 3.3]{behrend2003equivariant}, this central $S^1$-extension of groupoids corresponds to a central extension of Lie groups,
\[
1\to S^1 \to \widehat{LG} \to LG \to 1.
\]
For simple $G$, such central extensions are classified (see \cite{presley1986loop} and \cite{laredo1999positive}), and a necessary condition for the existence of such a central extension is that the inner product $\langle\cdot,\cdot \rangle$ be a multiple of $l_b\cdot B$, where $B$ denotes the \emph{basic inner product} on $\g$, and  $l_b$ is the \emph{basic level}.  Recall $B$ is the invariant inner product on $\g$ normalized to make short co-roots have squared length $2$, and the integer $l_b$ is the smallest integer $l$ such that $l\cdot B(\lambda_1,\lambda_2)\in \ZZ$ for all elements $\lambda_1$, $\lambda_2$ of the integral lattice $\Lambda \subset \mathfrak{t}$  ($\mathfrak{t}$ is the Lie algebra of a maximal torus $T\subset G$).  The integers $l_b$ are computed in \cite{laredo1999positive} for each compact simple Lie group $G$ (e.g.~if $G$ is simply connected, $l_b=1$). Therefore, by Corollary \ref{c:ammloop}, we obtain:

\begin{corollary}
Let $G$ be a compact simple Lie group and let $l\cdot B$ be a positive integer multiple of the basic inner product on $\g$. If the quasi-Hamiltonian $G$-space $(M,\omega_M,\mu)$ admits a prequantization, then $l$ is a multiple of the basic level $l_b$.
\end{corollary}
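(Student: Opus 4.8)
The plan is to trace the existence of a prequantization through Corollary \ref{c:ammloop} and Theorem \ref{t:compare}(1) down to the existence of a central extension of $LG$, and then invoke the classification of such extensions for simple $G$. Suppose $(M,\omega_M,\mu)$ admits a prequantization. By Corollary \ref{c:ammloop}, the corresponding pre-Hamiltonian $LG$-space $(X,\omega_X,\Phi)$ for the symplectic groupoid $LG\ltimes L\g^* \toto L\g^*$ of Example \ref{eg:loopgroup} also admits a prequantization. By Definitions \ref{d:preq} and \ref{d:relDD}, this prequantization is a relative DD-bundle $(\calF,\alpha;\calA,\calE,\tau)$ for $\PPhi$, and in particular it contains as an ingredient an $(LG\ltimes L\g^*)$-equivariant DD-bundle $(\calA,\calE,\tau)$ whose real DD-class represents the quasi-symplectic structure $\nu$ (here the $3$-form on $L\g^*$ is $0$, since $L\g^*$ is a vector space).

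Next I would extract an $S^1$-central extension. Since $L\g^*$ is contractible, $H^3(L\g^*;\ZZ)=0$, so $\epsilon^*(\calA,\calE,\tau)$ is Morita trivial over the unit space $L\g^*$; this verifies the hypothesis of Theorem \ref{t:compare}. Applying Theorem \ref{t:compare}(1) produces an $S^1$-central extension $R\to LG\times L\g^* \toto L\g^*$ of the action groupoid. Because the base $L\g^*$ is a vector space (in particular contractible), \cite[Theorem 3.3]{behrend2003equivariant} then identifies this $S^1$-central extension of the action groupoid with a central extension of Lie groups
\[
1\to S^1 \to \widehat{LG}\to LG\to 1,
\]
whose level is fixed by the cohomology class represented by $\nu$, i.e.~by the inner product $\langle\cdot,\cdot\rangle = l\cdot B$. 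Consequently the level of $\widehat{LG}$ is $l$.

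Finally I would appeal to the classification of central extensions of $LG$ for simple $G$ (see \cite{presley1986loop,laredo1999positive}): such a level-$l$ central extension exists if and only if $l\cdot B$ takes integer values on the integral lattice $\Lambda\subset \mathfrak{t}$, equivalently if and only if $l$ is an integer multiple of the basic level $l_b$. Since the extension $\widehat{LG}$ was produced above, this forces $l_b \mid l$, which is the asserted conclusion.

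The main obstacle I anticipate is the bookkeeping in the middle step: one must confirm that the contractibility of $L\g^*$ simultaneously supplies the unit-space trivialization needed to invoke Theorem \ref{t:compare}(1) and places one in the regime where the Behrend--Xu correspondence between groupoid central extensions and genuine loop-group central extensions applies, and then correctly matches the level of the resulting extension with the integer $l$ attached to the inner product $l\cdot B$. Once these identifications are in place, the integrality constraint $l_b\mid l$ follows immediately from the cited classification.
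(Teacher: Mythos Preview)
Your proposal is correct and follows essentially the same route as the paper: pass to the loop-group side via Corollary~\ref{c:ammloop}, extract from the prequantization an $(LG\ltimes L\g^*)$-equivariant DD-bundle, use Theorem~\ref{t:compare}(1) together with \cite[Theorem~3.3]{behrend2003equivariant} to obtain a genuine central extension $\widehat{LG}$, and then invoke the classification \cite{presley1986loop,laredo1999positive} to force $l_b\mid l$. Your only addition is making explicit (via contractibility of $L\g^*$) why the unit-space trivialization hypothesis of Theorem~\ref{t:compare}(1) is satisfied, which the paper leaves implicit.
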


\section{Symplectic quotients}\label{s:red}

In this section, show how the definition of prequantization \ref{d:preq} is compatible with symplectic quotients.

Let $(\GG,\omega\oplus \eta)$ be a proper quasi-presymplectic groupoid, and let $(X,\omega_X,\Phi)$ be a pre-Hamiltonian $\GG$-space.  Suppose that $z\in G_0$ is a regular value for $\Phi:X\to G_0$.  Then the $\GG$-action on $X$ restricts to a $G_1(z)$-action on the level set manifold $\Phi^{-1}(z)\subset X$, where $G_1(z)=\{g\in G_1 \, \vert \, s(g)=z=t(g)\}$ denotes the isotropy group of $z$.  Below, we show how a prequantization for $(X,\omega_X,\Phi)$ pulls back to a $G_1(z)$-equivariant $S^1$-bundle over $\Phi^{-1}(z)$.  When the $G_1(z)$-action on the level set is free, we may view this as a prequantization of the resulting symplectic quotient $\Phi^{-1}(z)/G_1(z)$ (see \cite[Theorem 3.18]{xu2004momentum}).

\begin{theorem} \label{t:red}
Let $(\GG,\omega\oplus \eta)$ be a proper quasi-presymplectic groupoid, and let $(X,\omega_X,\PPhi)$ be a pre-Hamiltonian $\GG$-space.  Suppose that $z\in G_0$ is a regular value for $\Phi:X\to G_0$ and let $j:\Phi^{-1}(z) \hookrightarrow X$ denote the inclusion map.  A prequantization of $(X,\omega_X,\PPhi)$ gives rise to a $G_1(z)$-equivariant $S^1$-bundle over $\Phi^{-1}(z)$ whose real $G_1(z)$-equivariant curvature class is $[j^*\omega_X] \in H^2(G_1(z)\ltimes \Phi^{-1}(z);\RR)$.
\end{theorem}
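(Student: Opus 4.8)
The plan is to restrict the prequantization to the isotropy data over $z$ and to exploit that, over the single point $\{z\}$, every $S^1$-gerbe is trivial; this collapses the degree-three relative Dixmier--Douady datum to a degree-two datum, namely an equivariant $S^1$-bundle. I would work throughout in the equivalent language of differential characters, using the degree-two correspondence with equivariant $S^1$-bundles recalled from \cite{lerman2008differential} and proceeding in parallel with the proof of Theorem~\ref{t:compare}(2). First I would form the inclusion of action groupoids $\mathbf{j}\colon G_1(z)\ltimes \Phi^{-1}(z)\hookrightarrow \GG\ltimes X$ covering the inclusion of the isotropy group $G_1(z)\hookrightarrow\GG$ (with $G_1(z)\rightrightarrows\{z\}$), and pull the prequantization---viewed as a relative differential character $(u,[v];\zeta)$ with $\zeta=(\zeta_0,\zeta_1,[\zeta_2])$ as in the proof of Theorem~\ref{t:compare}(2)---back to a relative differential character for the restricted morphism $G_1(z)\ltimes\Phi^{-1}(z)\to G_1(z)$. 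The decisive feature is that the unit space of the target is now a point: the component $\zeta_0$ restricts into $\opDC_1^3(\{z\})$, and since $\Omega^{\ge 1}(\{z\})=0$ and $H^3(\{z\};\ZZ)=0$, it admits a trivialization $\kappa=(b_z,g_z,0)$ whose $2$-form part vanishes (indeed $\eta|_{\{z\}}=0$).

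Next I would assemble the $S^1$-bundle. Imitating the formula from the proof of Theorem~\ref{t:compare}(2), the degree-two differential character
\[
L=\bigl(B_0-\Phi^*b_z,\ F_0-\Phi^*g_z,\ j^*\omega_X\bigr)
\]
on $\Phi^{-1}(z)$ represents the desired $S^1$-bundle; its closedness follows from the restricted pre-Hamiltonian relation $\Phi^*\eta=-d\omega_X$, which forces $d(j^*\omega_X)=0$ because $\Phi\circ j$ is the constant map to $z$. To equip $L$ with a $G_1(z)$-equivariant structure I must produce a $1$-isomorphism $[y]$, that is, an element $y\in\opDC_1^1(G_1(z)\times\Phi^{-1}(z))$ with $dy=\partial L$ and $\partial y$ exact. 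Using the defining relations of $(u,[v])$ together with $\partial(\Phi^*\kappa)=0$---the face maps of the restricted groupoid are constant equal to $z$---a short computation identifies $\partial L$ with $\PPhi_1^*\zeta_1|_{G_1(z)\times\Phi^{-1}(z)}=\pr^*(\zeta_1|_{G_1(z)})$ up to an exact term, so it suffices to find a primitive of $\pr^*(\zeta_1|_{G_1(z)})$.

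Here properness enters decisively. The isotropy group $G_1(z)$ is compact, whence $H^2(G_1(z);\RR)=0$ and the real class $[\omega|_{G_1(z)}]$ vanishes; equivalently, the $S^1$-central extension $R_z\to G_1(z)$ classified by $\zeta_1|_{G_1(z)}$ is trivial at the level of real classes. This is exactly what allows the $R_z$-equivariance naively produced by the restriction to be refined to genuine $G_1(z)$-equivariance: a real primitive of $\pr^*(\zeta_1|_{G_1(z)})$ exists, yielding $y$, and the coherence condition that $\partial y$ be exact follows from the coherence of $(u,[v])$ exactly as in Theorem~\ref{t:compare}(2). This produces the object $(L,[y])\in\DC_1^2(G_1(z)\ltimes\Phi^{-1}(z))$---the desired $G_1(z)$-equivariant $S^1$-bundle---whose real equivariant curvature is read off from the $2$-form data. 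Since $\kappa$ contributes nothing in degree two and $\Phi$ is constant on the level set, the horizontal $(2,0)$-component is $j^*\omega_X$, while the $\Omega^1$-correction demanded by $\delta$ is supplied by the restricted relation $\PPhi_1^*\omega=-\partial\omega_X$; hence the real $G_1(z)$-equivariant curvature class is $[j^*\omega_X]\in H^2(G_1(z)\ltimes\Phi^{-1}(z);\RR)$.

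The hard part will be this middle step: constructing the equivariant $1$-isomorphism $[y]$ and checking its coherence. The genuine obstruction to a primitive of $\pr^*(\zeta_1|_{G_1(z)})$ is the central extension $R_z$, and the whole argument turns on using properness to annihilate its real class---this is where the hypothesis that $\GG$ be proper is indispensable. I would also verify that the construction is independent, up to isomorphism, of the chosen trivialization $\kappa$ over $\{z\}$: any two choices differ by a cocycle pulled back from the point, hence by a cohomologically trivial term on $\Phi^{-1}(z)$, leaving both the equivariant bundle (up to isomorphism) and its curvature class unchanged.
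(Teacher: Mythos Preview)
Your route is substantially more elaborate than the paper's. The paper observes that, because $\Phi\circ j$ is constant, the restricted trivialization $\mathbf{j}^*\bigl((B_0,F_0,\omega_X),[B_1,F_1,0]\bigr)$ is \emph{already} a $G_1(z)$-equivariant differential character of degree~$2$: the cocycle is $(j^*B_0,j^*F_0,j^*\omega_X)$ and the equivariant $1$-isomorphism is $[j^*B_1,j^*F_1,0]$. No auxiliary trivialization $\kappa$ over $\{z\}$ is introduced, no central extension $R_z$ appears, and properness plays no explicit role in the argument. The curvature class $[j^*\omega_X]$ is then read off directly from the $2$-form slot.

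More seriously, your argument breaks at the step you flag as ``decisive.'' You assert that compactness of $G_1(z)$ forces $H^2(G_1(z);\RR)=0$; this already fails for a $2$-torus, where $H^2(T^2;\RR)\cong\RR$. So the claim ``a real primitive of $\pr^*(\zeta_1|_{G_1(z)})$ exists'' is not justified by properness alone. And even were the real class to vanish, that would not suffice: producing the primitive $y\in\opDC_1^1$ you need requires the \emph{integral} class $[\zeta_1|_{G_1(z)}]\in H^2(G_1(z);\ZZ)$ to vanish, and vanishing of the real Chern class of $R_z$ does not by itself refine $R_z$-equivariance to $G_1(z)$-equivariance. The detour through the template of Theorem~\ref{t:compare}(2) imports machinery designed for the global situation---where one genuinely must trade $\GG$-equivariance for $\mathbf{R}$-equivariance---into a local situation where the constancy of $\Phi\circ j$ lets the prequantization data descend directly.
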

\begin{proof}
As in the proof of Theorem \ref{t:compare} (2), we shall use the framework of equivariant differential characters.  Let $\zeta=(\zeta_0, \zeta_1, [\zeta_2])=((c_0,h_0,\eta),(c_1,h_1,\omega),[c_2,h_2,0])$ be an equivariant differential character (of degree 3) and let Let  $((B_0,F_0,\omega_X),[B_1,F_1,0]):0\to \PPhi^*\zeta $ be a prequantization.  
Since the composition $\Phi\circ j$ is constant,  then $j^*((B_0,F_0,\omega_X),[B_1,F_1,0])$ defines an equivariant differential character of degree 2.  Indeed, a straightforward verification shows $(j^*B_0, j^*F_0, j^*\omega_X)$ is cocycle in $DC_1^2(\Phi^{-1}(z))$ and that $[j^*B_1,j^*F_1,0]$ defines the required 1-isomorphism $\partial_0^*(j^*B_0, j^*F_0, j^*\omega_X) \to \partial_1^*(j^*B_0, j^*F_0, j^*\omega_X)$ to give an equivariant differential character of degree 2.  It follows that $[j^*\omega_X]$ in $H^2(G_1(z)\ltimes \Phi^{-1}(z);\RR)$ is the resulting $G_1(z)$-equivariant curvature class.
\end{proof}

\begin{remark}  
The proof of Theorem \ref{t:red} shows how one can describe the resulting $G_1(z)$-equivariant $S^1$-bundle with curvature class $j^*\omega$ in bundle-theoretic terms.  On the level set $\Phi^{-1}(z)$, there are two Morita trivializations of the equivariant $DD$-bundle $j^*\PPhi^*(\calA,\calE,\tau)$: one resulting from the pullback of the prequantization $(\calF,\alpha):(\CC,\CC,\id) \dashto \PPhi^*(\calA,\calE,\tau)$ along the inclusion $j$, and the canonical trivialization coming  from the fact that the composition $\Phi \circ j$ is trivial.  Their difference defines a line bundle (or equivalently its associated $S^1$-bundle) over $\Phi^{-1}(z)$.
\end{remark}

\begin{remark} \label{r:quot}
Under the conditions of Theorem \ref{t:red}, if we assume additionally that $G_1(z)$ acts freely on the level set $\Phi^{-1}(z)$, we see that the $G_1(z)$-equivariant $S^1$-bundle obtained in the Theorem descends to a prequantum $S^1$-bundle on the symplectic quotient $\Phi^{-1}(z)/G_1(z)$.
\end{remark}


\bibliographystyle{plain}

\end{document}